\date{\today}
\theoremstyle{plain}
\newtheorem{theorem}{Theorem}[section]
\newtheorem{proposition}[theorem]{Proposition}
\newtheorem{lemma}[theorem]{Lemma}
\newtheorem{corollary}[theorem]{Corollary}
\newtheorem{example}[theorem]{Example}
\newtheorem*{open question}{Open Question}
\newtheorem{definition}[theorem]{Definition}
\theoremstyle{definition}
\theoremstyle{remark}
\newcommand{\Spec}{\operatorname{Spec}}
\newcommand{\wSpec}{w\mbox{-Spec}}
\def\GV{{\rm GV}}
\def\tor{{\rm tor_{\rm GV}}}
\def\Hom{{\rm Hom}}
\def\Ext{{\rm Ext}}
\def\GV{{\rm GV}}
\def\Max{{\rm Max}}
\def\DW{{\rm DW}}
\def\Spec{{\rm Spec}}
\def\Max{{\rm Max}}
\begin{document}

\title[Rings with uniformly  $S$-$w$-Noetherian spectrum]{Rings with uniformly  $S$-$w$-Noetherian spectrum}

\author[X. Zhang]{Xiaolei Zhang}
\address{School of Mathematics and Statistics, Tianshui Normal University, tianshui 741001, China}
\email{zxlrghj@163.com}

\keywords{uniformly $S$-$w$-Noetherian spectrum, ascending chain condition, countably generated ideal, polynomial ring}
\subjclass[2020]{13A15, 13E99}

\begin{abstract} 
In this paper, the notion of rings with uniformly  $S$-$w$-Noetherian spectrum is introduced. Several characterizations of rings with uniformly  $S$-$w$-Noetherian spectrum are given. Actually, we show that a ring
$R$ has uniformly $S$-$w$-Noetherian spectrum with respect to some $s\in S$ if and only if  each ascending chain of radical $w$-ideals of $R$ is stationary   with respect to $s\in S$, if and only if each radical (prime) \mbox{$(w$-$)$ideal} of $R$ is radically $S$-$w$-finite with respect to $s$, if and only if  each  countably generated ideal  of $R$ is radically $S$-$w$-finite with respect to $s$, if and only if  $R[X]$ has  uniformly $S$-$w$-Noetherian spectrum, if and only if
$R\{X\}$ has  uniformly $S$-Noetherian spectrum.

Beside, we show a ring $R$ has  Noetherian (resp., uniformly  $S$-Noetherian) spectrum with respect to $s$ if and only if each countably generated ideal of $R$ is radically finite ($S$-finite with respect to $s$), which is a new result in the classical case.
\end{abstract}

\maketitle

\section{Introduction}
Throughout this paper, $R$ is always  a commutative ring with identity. For a subset $U$ of  an $R$-module $M$, we denote by $\langle U\rangle$ the $R$-submodule of $M$ generated by $U$. A subset $S$ of $R$ is called a multiplicative subset of $R$ if $1\in S$ and $s_1s_2\in S$ for any $s_1\in S$, $s_2\in S$.

Early in 1968,  Ohm and Pendleton \cite{OP68} introduced a ring $R$ with \emph{Noetherian spectrum}, i.e., the prime spectrum $\Spec(R)$ of $R$ is Noetherian. Since the open subsets of  $\Spec(R)$ and the radical ideals of $R$ correspond to each other one-to-one in  order,  a ring $R$ has the Noetherian spectrum if and only if the ascending
chain condition holds for the radical ideals of $R$.  In 1985, Ribenboim \cite{r85} showed that showed that a ring $R$ has the Noetherian spectrum if and only if every (prime) ideal of $R$ is \emph{radically of finite type} (or radically finite), i.e., an ideal $I$  satisfies that $\sqrt{I}=\sqrt{J}$ for some finitely generated subideal $J$ of $I$, where $\sqrt{I}:=\{r\in R\mid r^n\in I\ \mbox{for some}\ n\geq1\}$. Some more works on rings with Noetherian spectrum  can be found in \cite{B01,fhp97,TLC13}.

In 2014, Kim, Kwon and Rhee \cite{KKR14}  called an integral domain $R$ having \emph{strong Mori spectrum} (or \emph{$w$-Noetherian spectrum})
if the induced topology on $\wSpec(R)$ by the Zariski topology on $\Spec(R)$ is Noetherian, or equivalently, it satisfies the descending chain condition on the sets of the form $
W(I):=\{\,P\in\wSpec(R)\mid I\subseteq P\,\}$
where $I$ runs over $w$-ideals of $R$. Moreover, several characterizations of integral domains with strong Mori spectrum were given (see \cite[Theorem 2.6]{KKR14}). Recently, Samir and Ahmed \cite{GH25}  led into the notion of rings with uniformly $S$-Noetherian spectrum. An ideal $I$ of $R$ is said to be \emph{radically $S$-finite} (with respect to $s$) if $sI \subseteq \sqrt{J}$ for some finitely generated subideal $J$ of $I$ and some $s \in S$. A ring $R$ is said to have an \emph{uniformly $S$-Noetherian spectrum} if there is $s\in S$ such that every ideal of $R$ is radically $S$-finite with respect to $s$. Similar characterizations of rings with uniformly $S$-Noetherian spectrum to classical cases can be found in \cite[Theorem 3.11, Proposition 3.16]{GH25}. Subsequently, the authors  \cite{ZAK} investigated  rings with uniforly $S$-Noetherian spectrum  under various constructions,including flat overrings, Serre’s conjecture rings, Nagata rings, Anderson rings, and semigroup rings.

Since the $w$-operations and multiplicative subsets play a different methods to study rings and modules. It is very interesting to unify them together. For example, the  uniformly $S$-$w$-Noetherian rings introduced in  \cite{zuswn-24} are generalizations of both $w$-Noetherian rings and uniformly $S$-Noetherian rings. The main purpose of this paper is to generalize  integral domains with strong Mori spectrum and rings with uniformly $S$-Noetherian spectrum. Actually, we introduce the notion of rings with uniformly $S$-$w$-Noetherian spectrum. An ideal $I$ of $R$ is said to be \emph{radically $S$-$w$-finite } (with respect to $s$) if $sI \subseteq \sqrt{F_w}$ for some finitely generated subideal $F$ of $I$ and some $s \in S$.
A ring $R$ is said to have \emph{uniformly  $S$-$w$-Noetherian spectrum}  (with respect to $s$) if there exists $s\in S$ such that every ideal of $R$ is radically $S$-$w$-finite  with respect to $s$. We characterize rings with uniformly  $S$-$w$-Noetherian spectrum via various aspects:
\begin{theorem}$($=Theorem \ref{main-1}+Theorem \ref{main-2}$)$
	Let $R$ be a ring, $S$ a multiplicative subset of $R$ and $s\in S$.
	Then the following statements are equivalent.
	\begin{enumerate}
		\item $R$ has uniformly $S$-$w$-Noetherian spectrum with respect to $s$.
		\item Each ascending chain of radical $w$-ideals of $R$ is stationary   with respect to $s\in S$.
		\item Each radical \mbox{$(w$-$)$ideal} of $R$ is radically $S$-$w$-finite with respect to $s$.
		\item Each prime  \mbox{$(w$-$)$ideal} of $R$ is radically $S$-$w$-finite with respect to $s$.
		\item Each  countably generated ideal  of $R$ is radically $S$-$w$-finite with respect to $s$.
		\item $R[X]$ has  uniformly $S$-$w$-Noetherian spectrum with respect to $s$.
		
		\item $R\{X\}$ has  uniformly $S$-Noetherian spectrum  with respect to $s$.
	\end{enumerate}	
\end{theorem}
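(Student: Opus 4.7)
The plan is to establish the cyclic chain $(1)\Rightarrow(2)\Rightarrow(3)\Rightarrow(4)\Rightarrow(5)\Rightarrow(1)$ among the first five items, and then to handle the polynomial-ring equivalences $(1)\Leftrightarrow(6)\Leftrightarrow(7)$ separately. The implication $(1)\Rightarrow(2)$ is a routine chain argument: given an ascending chain $I_1\subseteq I_2\subseteq\cdots$ of radical $w$-ideals, set $I=\bigcup_n I_n$, apply $(1)$ to get $sI\subseteq\sqrt{F_w}$ for some finitely generated $F\subseteq I$, pick $N$ with $F\subseteq I_N$, and use that $I_N$ is a radical $w$-ideal to conclude $sI_m\subseteq I_N$ for all $m\geq N$. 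For $(2)\Rightarrow(3)$ I would use an element-picking argument: if a radical $w$-ideal $I$ is not radically $S$-$w$-finite with respect to $s$, then one can iteratively select $a_{n+1}\in I$ with $sa_{n+1}\notin\sqrt{\langle a_1,\ldots,a_n\rangle_w}$, producing a strictly ascending chain $\sqrt{\langle a_1,\ldots,a_n\rangle_w}$ of radical $w$-ideals inside $I$ that no $N$ can stabilize up to $s$, contradicting $(2)$. The step $(3)\Rightarrow(4)$ is trivial since primes are radical, and $(4)\Rightarrow(3)$ is a Cohen-type Zorn's-lemma argument: a maximal radical $w$-ideal failing to be radically $S$-$w$-finite with respect to $s$ is forced to be prime via the standard multiplicative manipulation, using that the union of a chain of radical $w$-ideals is still a radical $w$-ideal. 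The implication $(1)\Rightarrow(5)$ is immediate.

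The decisive new ingredient is $(5)\Rightarrow(1)$, which I would prove by the same element-picking idea but under the weaker hypothesis. Given an arbitrary ideal $I$, pick any $a_1\in I$, and having chosen $a_1,\ldots,a_n$, stop if $sI\subseteq\sqrt{\langle a_1,\ldots,a_n\rangle_w}$, else pick $a_{n+1}\in I$ with $sa_{n+1}\notin\sqrt{\langle a_1,\ldots,a_n\rangle_w}$. If the process terminates, $F=\langle a_1,\ldots,a_n\rangle$ witnesses the required radical $S$-$w$-finiteness of $I$. Otherwise the countably generated subideal $J=\langle a_1,a_2,\ldots\rangle\subseteq I$, by hypothesis $(5)$, satisfies $sJ\subseteq\sqrt{F_w}$ for some finitely generated $F\subseteq J$; but $F\subseteq\langle a_1,\ldots,a_N\rangle$ for some $N$, so $sa_{N+1}\in\sqrt{F_w}\subseteq\sqrt{\langle a_1,\ldots,a_N\rangle_w}$, which contradicts the construction. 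Monotonicity of the $w$-operation and of the radical under ideal inclusion is all that this step needs.

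For $(1)\Leftrightarrow(6)$, the idea is to transfer chains of radical $w$-ideals between $R$ and $R[X]$ via extension and contraction, using that every $w$-ideal of $R[X]$ contracts to a $w$-ideal of $R$ and that, for a finitely generated $F\subseteq R$, the radicals $(\sqrt{F_w})[X]$ and $\sqrt{(F[X])_w}$ are related in the right way so that chains in $R$ lift to chains in $R[X]$ and vice versa. The equivalence $(6)\Leftrightarrow(7)$ rests on the Nagata ring construction: $R\{X\}$ is a localization of $R[X]$ in which every $w$-ideal corresponds to an ordinary ideal, so that prime (radical) $w$-ideals of $R[X]$ and prime (radical) ideals of $R\{X\}$ are in inclusion-preserving bijection, whence the uniformly $S$-$w$-Noetherian spectrum condition for $R[X]$ is exactly the uniformly $S$-Noetherian spectrum condition for $R\{X\}$. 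The two main obstacles will be $(5)\Rightarrow(1)$, where one must track the $w$-closure of the chain $\langle a_1,\ldots,a_n\rangle_w$ carefully since radicals and $w$-operation do not automatically commute, and $(1)\Leftrightarrow(6)$, where the $w$-operation on $R[X]$ is not induced termwise from that on $R$ and one must invoke the background results on $w$-ideals in polynomial rings to match up the radical $S$-$w$-finiteness condition on the two sides.
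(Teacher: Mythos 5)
Your treatment of the first five conditions contains a good idea but, as organized, does not establish their equivalence. The implications you actually supply are $(1)\Rightarrow(2)$, $(2)\Rightarrow(3)$, $(3)\Leftrightarrow(4)$ and $(1)\Leftrightarrow(5)$: the announced cycle required $(4)\Rightarrow(5)$, but you silently replaced it by the Cohen-type $(4)\Rightarrow(3)$, so nothing in your graph ever returns from $(3)$ or $(4)$ to $(1)$, $(2)$ or $(5)$ --- for instance $(4)\Rightarrow(1)$ is not derivable from what you state. The missing link is exactly the paper's $(3)\Rightarrow(1)$: given an arbitrary ideal $I$, apply $(3)$ to the radical ideal $\sqrt{I}$ to get $s\sqrt{I}\subseteq\sqrt{\langle a_1,\dots,a_n\rangle_w}$ with $a_i\in\sqrt{I}$, then replace each $a_i$ by a power $b_i=a_i^{r_i}\in I$ and check $\sqrt{\langle a_1,\dots,a_n\rangle_w}\subseteq\sqrt{\langle b_1,\dots,b_n\rangle_w}$, using $(\sqrt{F})_w=\sqrt{F_w}$; this small step must be added before your scheme closes up. On the positive side, your $(5)\Rightarrow(1)$ (stop-or-pick a countable sequence $a_1,a_2,\dots$ and apply the hypothesis to $\langle a_1,a_2,\dots\rangle$) is correct and is genuinely simpler than the paper's argument, which runs a transfinite induction over an ordinal-indexed generating set; your $(2)\Rightarrow(3)$ picking argument is also sound and in fact proves $(2)\Rightarrow(1)$ verbatim, which would repair part of the structure.

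The serious gap is $(1)\Rightarrow(6)$. Transferring chains of radical $w$-ideals between $R$ and $R[X]$ by extension and contraction cannot work, because radical $W$-ideals of $R[X]$ are in general not extended from $R$ (already over a field the ideal $(X-1)$ is not extended, and in general uppers to contracted primes abound), so controlling chains of extended ideals says nothing about an arbitrary ascending chain of radical $W$-ideals of $R[X]$. This direction is the analogue of the Ohm--Pendleton/Hilbert-basis theorem and carries the real content of Theorem \ref{main-2}: the paper takes a $W$-ideal $P$ of $R[X]$ maximal among those not radically $S$-$w$-finite with respect to $s$, proves $P$ is prime, sets $\mathfrak p=P\cap R$ and $M=(\mathfrak pR[X])_W$, chooses $f\in P\setminus M$ of least degree with leading coefficient $a\notin P$, applies maximality to $(P+(a))_W$, and performs a pseudo-division $a^mg=fq+r$ to land $sP$ inside $\sqrt{(B+(f)+KR[X])_W}$; none of this is captured, or replaceable, by your chain-transfer sketch. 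Your $(6)\Leftrightarrow(7)$ likewise leans on an asserted inclusion-preserving bijection between (prime, radical) $W$-ideals of $R[X]$ and ideals of $R\{X\}$ under which the two finiteness conditions ``are exactly'' each other; the direction $(6)\Rightarrow(7)$ is indeed a routine localization argument, but the bijection and the transfer of radical $S$-$w$-finiteness need proof, and the paper avoids this by establishing $\sqrt{I}\{X\}=\sqrt{I\{X\}}$ and the equivalence ``$I$ radically $S$-$w$-finite iff $I\{X\}$ radically $S$-finite'', closing the loop as $(7)\Rightarrow(1)$ at the level of $R$ rather than back through $R[X]$.
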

Beside, we show a ring $R$ has  Noetherian (resp., uniformly  $S$-Noetherian) spectrum with respect to $s$ if and only if each countably generated ideal of $R$ is radically finite ($S$-finite with respect to $s$), which is a new result in the classical case.

As our work involves $w$-operations, we recall the related notions from \cite{FK24}. Let $R$ be a ring and $J$ a finitely generated ideal of $R$. Then $J$ is called a \emph{$\GV$-ideal} if the natural homomorphism $R\rightarrow \Hom_R(J,R)$ is an isomorphism. The set of all $\GV$-ideals is denoted by $\GV(R)$. Let $M$ be an $R$-module. Define

\begin{center}
	{\rm $\tor(M):=\{x\in M|Jx=0$, for some $J\in \GV(R) \}.$}
\end{center}
An $R$-module $M$ is said to be \emph{$\GV$-torsion} (resp. \emph{$\GV$-torsion-free}) if $\tor(M)=M$ (resp. $\tor(M)=0$). A $\GV$-torsion-free module $M$ is called a \emph{$w$-module} if $\Ext_R^1(R/J,M)=0$ for any $J\in \GV(R)$, and the \emph{$w$-envelope} of $M$ is given by
\begin{center}
	{\rm $M_w:=\{x\in E(M)|Jx\subseteq M$, for some $J\in \GV(R) \},$}
\end{center}
where $E(M)$ is the injective envelope of $M$. Therefore, $M$ is a $w$-module if and only if $M_w=M$.  A \emph{$\DW$ ring} $R$ is a ring for which every $R$-module is a $w$-module. Examples of $\DW$ rings include semihereditary rings and commutative rings with krull dimension $\leq 1$.
The set of all prime $w$-ideals is denoted by $w$-$\Spec(R)$.
A \emph{maximal $w$-ideal} is an ideal of $R$ which is maximal among the $w$-submodules of $R$. The set of all maximal $w$-ideals is denoted by $w$-$\Max(R)$. All maximal $w$-ideals are  prime ideals , i.e., $w$-$\Max(R)\subseteq w$-$\Spec(R)$ (see {\cite[Theorem 6.2.14]{FK24}}).

\section{Rings with uniformly  $S$-Noetherian spectrum}

Recently, Samir and Hamed \cite{GH25} introduced the notion of rings with uniformly $S$-Noetherian spectrum.
An ideal $I$ of $R$ as being radically $S$-finite (with respect to $s$) if $sI \subseteq \sqrt{J} \subseteq \sqrt{I}$ for some finitely generated subideal $J$ of $I$ and some $s \in S$. A ring $R$ is said to have an \emph{uniforly $S$-Noetherian spectrum} (abbreviated to be $u$-$S$-Noetherian spectrum) if there is $s\in S$ such that every ideal of $R$ is radically $S$-finite with respect to $s$.

They gave several characterizations of rings with uniforly $S$-Noetherian spectrum. Actually, they showed that 
\begin{theorem} \cite[Theorem 3.11, Proposition 3.16]{GH25} Let $R$ be a commutative ring, $S$ a  multiplicative subset of $R$ and $s\in S$. Then the following statements are equivalent.
	\begin{enumerate}
		\item $R$ has uniformly  $S$-Noetherian spectrum with respect to $s$.
		\item  Each ascending chain of radical ideals of $R$ is stationary   with respect to  $s\in S$.
		\item  Each prime $($radical$)$ ideal of $R$ is radically finite with respect to $s\in S$.
	\end{enumerate}
\end{theorem}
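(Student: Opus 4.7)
The plan is to establish the cycle $(1) \Rightarrow (2) \Rightarrow (3) \Rightarrow (1)$ with the element $s$ kept fixed throughout. For $(1) \Rightarrow (2)$, given an ascending chain $I_1 \subseteq I_2 \subseteq \cdots$ of radical ideals, I would form the union $I = \bigcup_n I_n$; by (1) there is a finitely generated subideal $J \subseteq I$ with $sI \subseteq \sqrt{J}$, and since $J$ is finitely generated it sits inside some $I_N$. Because each $I_n$ is radical, $\sqrt{J} \subseteq \sqrt{I_N} = I_N$, and hence $sI_k \subseteq sI \subseteq I_N$ for every $k \geq N$, which is precisely uniform $S$-stationarity.

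For $(2) \Rightarrow (3)$, I would argue contrapositively. If some radical ideal $I$ is not radically $S$-finite with respect to $s$, then for any finitely generated $J \subseteq I$ there exists $x \in I$ with $sx \notin \sqrt{J}$. Starting from an arbitrary finitely generated $J_1 \subseteq I$, inductively pick $x_n \in I$ with $sx_n \notin \sqrt{J_n}$ and set $J_{n+1} = J_n + (x_n)$. The induced chain of radical ideals $\sqrt{J_1} \subseteq \sqrt{J_2} \subseteq \cdots$ cannot be uniformly $S$-stationary, because at any stage $N$ we would have $x_N \in \sqrt{J_{N+1}}$ yet $sx_N \notin \sqrt{J_N}$, contradicting (2). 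Since every prime ideal is radical, the prime version of (3) follows immediately.

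The hard direction is $(3) \Rightarrow (1)$, where I would adapt the classical Ribenboim Zorn's lemma argument to the uniform $S$-setting. Let $\mathcal{F}$ denote the family of ideals not radically $S$-finite with respect to $s$. A brief check shows $\mathcal{F}$ is closed under unions of chains (if the union were radically $S$-finite with witness $K$, then $K$ would sit in some member of the chain and propagate radical $S$-finiteness along the tail), so Zorn yields a maximal $Q \in \mathcal{F}$. The entire argument reduces to proving $Q$ is prime. Supposing not, pick $a, b \notin Q$ with $ab \in Q$, so both $Q + (a) \supsetneq Q$ and $(Q:a) \supsetneq Q$ (the latter contains $b$) leave $\mathcal{F}$ by maximality, yielding finitely generated $J_1 \subseteq Q + (a)$ and $J_2 \subseteq (Q:a)$ with $s(Q + (a)) \subseteq \sqrt{J_1}$ and $s(Q:a) \subseteq \sqrt{J_2}$. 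Writing the generators of $J_1$ as $y_i + a r_i$ with $y_i \in Q$, one sets $Y = (y_1, \ldots, y_k)$ and $D = a J_2 \subseteq Q$, and the heart of the matter is to show $sQ \subseteq \sqrt{Y + D}$: for $q \in Q$ the decomposition $(sq)^n = Y(q) + a z(q) \in J_1$ forces $z(q) \in (Q:a)$, so $(sz(q))^m \in J_2$ gives $(saz(q))^m \in aJ_2 = D$, and combining yields $s(sq)^n \in Y + \sqrt{D} \subseteq \sqrt{Y+D}$; a final power manipulation then extracts $sq \in \sqrt{Y + D}$. This contradicts $Q \in \mathcal{F}$, so $Q$ must be prime, contradicting (3). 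The principal obstacle is precisely this last calculation: extracting a single finitely generated subideal of $Q$ from the witnesses for $Q + (a)$ and $(Q:a)$ while keeping the same element $s$ (rather than a higher power) throughout the radical chase.
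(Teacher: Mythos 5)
Your proposal is correct, and it keeps the single element $s$ fixed at every stage, which is exactly what the uniform statement requires. Note that the paper itself does not prove this theorem (it is quoted from \cite{GH25}); the closest in-paper analogue is the $w$-version, Theorem \ref{main-1}, and your route is organized differently from that proof. There the author shows $(1)\Rightarrow(3)\Rightarrow(4)$, recovers $(4)\Rightarrow(3)$ by applying Zorn's lemma to the family of \emph{radical} ($w$-)ideals that are not radically $S$-($w$-)finite and excludes a non-prime maximal element $P$ via the product decomposition $I=P+aR$, $K=P+bR$, $P=\sqrt{IK}$, then gets $(3)\Rightarrow(1)$ by replacing the generators $a_i$ of a witness for $\sqrt{I}$ by powers $a_i^{r_i}\in I$, and finally routes the chain condition through countably generated ideals with a transfinite induction. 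You instead run the direct cycle $(1)\Rightarrow(2)\Rightarrow(3)\Rightarrow(1)$: your $(2)\Rightarrow(3)$ manufactures an explicit chain $\sqrt{J_1}\subseteq\sqrt{J_2}\subseteq\cdots$ that cannot be $s$-stationary, and your $(3)\Rightarrow(1)$ applies Zorn to \emph{all} ideals that fail radical $S$-finiteness and kills non-primeness of the maximal element $Q$ with the colon-ideal trick $Q+(a)$, $(Q:a)$, witness $Y+aJ_2$. Both arguments are sound; yours is more economical (no separate reduction from arbitrary ideals to radicals and no countably generated detour), while the paper's organization yields the countably-generated criterion as an extra equivalence. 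The one delicate point in your sketch, the final power manipulation, does go through with $s$ to the first power: from $s(sq)^n\in Y+\sqrt{aJ_2}\subseteq\sqrt{Y+aJ_2}$ one gets $s^N(sq)^{nN}\in Y+aJ_2$ for some $N$, and multiplying by $q^N$ gives $(sq)^{nN+N}\in Y+aJ_2$, hence $sq\in\sqrt{Y+aJ_2}$, so $sQ\subseteq\sqrt{Y+aJ_2}$ with the same $s$, contradicting $Q\in\mathcal{F}$ as you claim.
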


 It is well-known that a ring $R$ is a Noetherian ring (resp.,  PIR)  if and only if every countably generated ideal of $R$ is finitely $($resp., principally$)$ generated.
The author in \cite{zuswn-24} obtained that a ring $R$ is a uniformly $S$-Noetherian ring (resp.,  uniformly $S$-PIR) if and only if there is $s\in S$ such that  every countably generated ideal of $R$ is $S$-finite $($resp., $S$-principal$)$ with respect to $s$ under a mild condition. The main aim in this section is to extend this result to rings with uniformly $S$-Noetherian spectrum.

\begin{theorem} \label{count-us}
	Let $R$ be a commutative ring, $S$ a  multiplicative subset of $R$ and $s\in S$. Then  $R$ has uniformly  $S$-Noetherian spectrum with respect to $s$ if and only if each countably generated ideal of $R$ is radically $S$-finite with respect to $s$.
\end{theorem}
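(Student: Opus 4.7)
The plan is to prove the two implications separately. The forward direction is immediate from the definition: if every ideal of $R$ is radically $S$-finite with respect to $s$, then in particular every countably generated ideal is. All the work is in the converse.

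For the converse I would argue contrapositively: assuming there exists some ideal $I$ of $R$ that fails to be radically $S$-finite with respect to $s$, I will build a countably generated subideal $K \subseteq I$ that also fails to be radically $S$-finite with respect to $s$, contradicting the hypothesis. The construction is recursive. Pick $x_1 \in I$ arbitrarily. Given $x_1,\ldots,x_n \in I$, set $J_n := \langle x_1,\ldots,x_n\rangle$; this is a finitely generated subideal of $I$, and since $I$ is not radically $S$-finite with respect to $s$, we have $sI \not\subseteq \sqrt{J_n}$. Hence there is some $x_{n+1}\in I$ with $sx_{n+1}\notin \sqrt{J_n}$. This produces a sequence $(x_n)_{n\ge 1}$ in $I$ satisfying
\[
s x_{n+1}\notin \sqrt{\langle x_1,\ldots,x_n\rangle}\quad\text{for every } n\ge 1.
\]

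Now set $K:=\langle x_n : n\ge 1\rangle$, a countably generated subideal of $I$. The key observation is that any finitely generated subideal $F$ of $K$ must be contained in $\langle x_1,\ldots,x_N\rangle$ for some $N$, because each generator of $F$ is an $R$-linear combination of finitely many of the $x_n$'s. Therefore $\sqrt{F}\subseteq \sqrt{\langle x_1,\ldots,x_N\rangle}$, and by construction $sx_{N+1}\notin \sqrt{\langle x_1,\ldots,x_N\rangle}\supseteq \sqrt{F}$. Since $x_{N+1}\in K$, this shows $sK\not\subseteq \sqrt{F}$ for every finitely generated $F\subseteq K$, so $K$ is not radically $S$-finite with respect to $s$, contradicting the hypothesis.

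The main obstacle is of a technical rather than conceptual nature: one must ensure that the recursive selection is legitimate (it uses only dependent choice) and must justify cleanly that passing from a finitely generated subideal of $K$ to a finite truncation $\langle x_1,\ldots,x_N\rangle$ preserves the radical containment. Once these routine points are verified, the contradiction comes from the very element $sx_{N+1}$ selected at the next step of the recursion. It is worth noting that the same argument, verbatim, will extend to the $w$-setting in the subsequent theorem, provided one replaces $\sqrt{J}$ with $\sqrt{J_w}$ and uses that a finitely generated subideal of a countably generated ideal still lies in a finite truncation.
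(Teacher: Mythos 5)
Your proof is correct, and both implications are handled soundly: the forward direction is indeed trivial, and in the converse the recursive choice of $x_{n+1}\in I$ with $sx_{n+1}\notin\sqrt{\langle x_1,\dots,x_n\rangle}$ (possible precisely because $I$ is not radically $S$-finite with respect to $s$), together with the observation that any finitely generated subideal of $K=\langle x_n\mid n\ge 1\rangle$ sits inside some truncation $\langle x_1,\dots,x_N\rangle$, does yield a countably generated ideal that is not radically $S$-finite with respect to $s$. This is, however, a genuinely different route from the paper's. The paper argues directly rather than contrapositively: it fixes an arbitrary ideal $I=\langle r_\alpha\mid\alpha<\kappa\rangle$ with a well-ordered family of generators, introduces the ``bad'' index set $\mathcal{I}=\{\beta\mid (sr_\beta)^n\notin\sqrt{\langle r_\alpha\mid\alpha<\beta\rangle}\text{ for all }n\}$, proves by a descending-ordinal (no infinite decreasing sequence) argument that $sr_{\beta'}\in\sqrt{\langle r_\beta\mid\beta\in\mathcal{I}\rangle}$ for every generator, and then finishes by cases: if $\mathcal{I}$ is finite, $I$ is radically $S$-finite; if $\mathcal{I}$ is infinite, applying the countable hypothesis to a countable subset of $\mathcal{I}$ contradicts the defining property of $\mathcal{I}$. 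Your contrapositive construction is shorter and more elementary, needing only dependent choice and monotonicity of radicals, while the paper's transfinite bookkeeping is heavier; the paper's template is reused essentially unchanged (with ${\rm GV}$-ideal decorations) for the $(5)\Rightarrow(1)$ step of its $w$-operation theorem, and, as you correctly point out, your argument also transfers verbatim to that setting by replacing $\sqrt{J}$ with $\sqrt{J_w}$, so nothing is lost by the simpler approach.
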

\begin{proof}  Suppose $R$ has uniformly  $S$-Noetherian spectrum with respect to $s$. Then, trivially, each countably generated ideal of $R$ is radically $S$-finite with respect to $s$.
	
On the other hand, let $I=\langle r_1,r_2,\cdots\rangle$ be a countably generated ideal of $R$. Set  $I_n=\sqrt{\langle r_1,r_2,\cdots,r_n\rangle}$. Then $\{I_n\mid n\geq 1\}$ is an ascending chain of radical ideals of $R$. So there is a positive integer $k$ such that $sI_n\subseteq I_k$ for all $n\geq k$. Note that $\sqrt{I}=\bigcup\limits_{n\geq 1}I_n$. Indeed, for any $r\in \sqrt{I}$, we have $r^m\in I$ for some $m\geq 1$ and so $r^m\in \langle r_1,r_2,\cdots,r_n\rangle$ for some $n\geq 1$. Hence $r\in I_n$, that is,  $\sqrt{I}=\bigcup\limits_{n\geq 1}I_n$. Now,  we claim that $s\sqrt{I}\subseteq I_k$. Indeed, let $r\in \sqrt{I}$. Then $r\in I_n$ for some $n\geq 1$. So $sr\in sI_n\subseteq I_k$. Hence $s\sqrt{I}\subseteq I_k$. Consequently, $I$ is radically $S$-finite with respect to $s$.

	$(4)\Rightarrow (3)$ Let $I = \langle r_\alpha \mid \alpha < \kappa \rangle$ be an ideal of $R$, where $\kappa$ is a cardinal. Consider the index set
\[
\mathcal{I} = \left\{ \beta \mid  (s r_\beta)^n \not\in \sqrt{\langle r_\alpha \mid \alpha < \beta \rangle}\  \mbox{for all}\ n\geq 1\} \right  .
\]

We claim that  \textbf{for any $\boldsymbol{r_{\beta'} \in I}$, we have $\boldsymbol{sr_{\beta'} \in \sqrt{\langle r_\beta \mid \beta \in \mathcal{I} \rangle}}$.} Since  the first ordinal is in $\mathcal{I}$, we have $\mathcal{I}$ is nonempty. Let $r_{\beta'} \in I$. If $\beta' \in \mathcal{I}$, we are done. Otherwise, there exists $n_1 \geq 1$ such that
\[
(s r_{\beta_1})^{n_1} \in \langle r_\alpha \mid \alpha < \beta_1 \rangle.
\]
So $(s r_{\beta_1})^{n_1}=\sum k_ir_{\beta_{2,i}}$  with each $\beta_{2,i} < \beta_1$ and each $k_i\in R$. If all such $\beta_{2,i}$s belong to $\mathcal{I}$, we are done. Otherwise, let $\beta_{2,i_1},\cdots \beta_{2,i_{m_2}}$ be the finite set of all $\beta_{2,i}$s which are not in $\mathcal{I}$. Then, for all $k=1,\dots,m_2$,  we have
\[
(s r_{\beta_{2,i_k}})^{n_2} \in \langle r_\alpha \mid \alpha <\beta_2 \rangle
\]
for some $n_2 \geq 1$ and some $\beta_2 < \beta_1$.  Hence $$(s r_{\beta_1})^{n_1n_2m_2}=(s\sum\limits_{\beta_{2,i}\in \mathcal{I}} k_ir_{\beta_{2,i}}+s\sum\limits_{\beta_{2,i}\not\in \mathcal{I}} k_ir_{\beta_{2,i}})^{n_2m_2}=(s\sum\limits_{\beta_{2,i}\not\in \mathcal{I}} k_ir_{\beta_{2,i}})^{n_2m_2}+M,$$
where $M$ is a multiple of $\sum\limits_{\beta_{2,i}\in \mathcal{I}} k_ir_{\beta_{2,i}}$. So $M$ is in $\langle r_\beta \mid \beta \in \mathcal{I} \rangle$. Note that $(s\sum\limits_{\beta_{2,i}\not\in \mathcal{I}} k_ir_{\beta_{2,i}})^{n_2m_2}$ is the linear combination of $(sk_ir_{\beta_{2,i}})^{n_2}$, and so is in $\langle r_\alpha \mid \alpha <\beta_2 \rangle$.
Hence $(s r_{\beta_1})^{n_1n_2m_2}$ is in $\langle r_\beta \mid \beta \in \mathcal{I}\ \mbox{or}\  \beta<\beta_2 \rangle$. Continuing this process, there exists an sequence $r_{\beta_1}, r_{\beta_2},r_{\beta_3},\dots$ with $\beta_1>\beta_2>\beta_3>\dots$. Note that $\{\beta_i\mid i\geq 1\}$ is a subset of the well-ordered set $\kappa$. So $\{\beta_i\mid i\geq 1\}$  has a minimal element, which is a contradiction. Consequently, the claim holds.

If $\mathcal{I}$ is finite, then $I$ is radically $S$-finite with respect to $s$, completing the proof. Now assume that $\mathcal{I}$ is infinite. Consider a countable subset
\[
\mathcal{I}' = \{ \beta_1 < \beta_2 < \beta_3 < \cdots \} \subseteq \mathcal{I}.
\]
Since $\langle r_\beta \mid \beta \in \mathcal{I}' \rangle$ is radically $S$-finite with respect to $s$ by assumption, there exists a finite subset $\mathcal{I}''=\{\beta_1,\beta_2,\dots,\beta_k\} \subseteq \mathcal{I}'$ such that
\[
s \langle r_\beta \mid \beta \in \mathcal{I}' \rangle \subseteq \sqrt{\langle r_\beta \mid \beta \in \mathcal{I}'' \rangle}.
\]
So $(s\beta_{k+1})^{n_{k+1}}\in \langle r_\beta \mid \beta \in \mathcal{I}''\rangle$ for some $n_{k+1}>1.$ Consequently, $\beta_{k+1}\not\in \mathcal{I}$, which is a contradiction.

Consequently, $I$ is radically $S$-finite with respect to $s$, and it follows that $R$ has the uniformly $S$-Noetherian spectrum property with respect to $s$.
\end{proof}

\begin{corollary} \label{count}
	Let $R$ be a commutative ring. Then  $R$ has Noetherian spectrum if and only if each countably generated ideal of $R$ is radically finite.
\end{corollary}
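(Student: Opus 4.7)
The plan is to derive this corollary as the special case of Theorem \ref{count-us} obtained by choosing $S=\{1\}$ and $s=1$. First I would translate the hypotheses: when $s=1$, the condition ``$sI\subseteq\sqrt{J}$ for some finitely generated subideal $J$ of $I$'' reads $I\subseteq\sqrt{J}$, and since $J\subseteq I$ automatically gives $\sqrt{J}\subseteq\sqrt{I}$, this is equivalent to the equality $\sqrt{I}=\sqrt{J}$. That is exactly Ribenboim's definition of $I$ being \emph{radically (of) finite} (type), as recalled in the introduction.

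Next I would observe that, under this same specialization, ``$R$ has uniformly $S$-Noetherian spectrum with respect to $s$'' becomes ``every ideal of $R$ is radically finite'', which by Ribenboim's theorem \cite{r85} (quoted in the introduction) is equivalent to $R$ having Noetherian spectrum in the sense of Ohm--Pendleton \cite{OP68}.

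With these translations in hand, the corollary is immediate from Theorem \ref{count-us}: one direction is trivial, and the nontrivial direction follows by feeding the hypothesis ``each countably generated ideal is radically finite'' into the theorem with $S=\{1\}$, $s=1$, to obtain uniform $S$-Noetherian spectrum with respect to $s=1$, which is the classical Noetherian spectrum condition. No new argument is required beyond noting that the transfinite induction carried out in the proof of Theorem \ref{count-us} (reducing a general ideal to a countably generated one via a well-ordering argument) goes through verbatim when $s=1$; this is the only step that one might worry about, but it has already been handled in the theorem.
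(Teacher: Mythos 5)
Your proposal is correct and follows exactly the paper's own route: the corollary is obtained by specializing Theorem \ref{count-us} to $S=\{1\}$, $s=1$, with the routine translations (radically $S$-finite with $s=1$ means radically finite, and uniform $S$-Noetherian spectrum with $s=1$ is the classical Noetherian spectrum condition via Ribenboim's characterization) that you spell out. No further comment is needed.
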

\begin{proof}
	Take $S=\{1\}$ in Theorem \ref{count-us}.
\end{proof}

\section{Rings with uniformly  $S$-$w$-Noetherian spectrum}

In 2014, Kim, Kwon, and  Rhee  \cite{KKR14} defined an integral domain having \emph{strong Mori spectrum}. The related notions can be trivially extend to commutative rings with zero divisors, which can be called rings with $w$-Noetherian spectrum. An ideal $I$ of $R$ is said to be \emph{radically $w$-finite }, if $I = \sqrt{F_w}$ for some finitely generated subideal $F$ of $I$. And a ring $R$ is said to have \emph{$w$-Noetherian spectrum}  if  every ideal of $R$ is radically $w$-finite.

Next, we will  generalize  integral domains with strong Mori spectrum and rings with uniformly $S$-Noetherian spectrum.

\begin{definition} Let $R$ be a ring, and $S$ a multiplicative subset of $R$.
	\begin{enumerate}
		\item An ideal $I$ of $R$ is said to be {radically $S$-$w$-finite }$($abbreviated to be $u$-$S$-$w$-Noetherian spectrum$) ($with respect to $s)$ if $sI \subseteq \sqrt{F_w}$ for some finitely generated subideal $F$ of $I$ and some $s \in S$.
		\item  A ring $R$ is said to have {uniformly  $S$-$w$-Noetherian spectrum} $($abbreviated to be $u$-$S$-$w$-Noetherian spectrum$) ($with respect to $s)$ if there exists $s\in S$ such that every ideal of $R$ is radically $S$-$w$-finite  with respect to $s$.
	\end{enumerate}
\end{definition}
If $R$ is a $\DW$-ring, then rings with uniformly  $S$-$w$-Noetherian spectrum have uniformly  $S$-Noetherian spectrum. It is clear that if $S_1$ is a multiplicative subset of $R$ contained in $S_2$, then rings with uniformly  $S_1$-$w$-Noetherian spectrum also have uniformly  $S_2$-$w$-Noetherian spectrum. The saturation $S^{\ast}$ of a multiplicative subset $S$ of $R$ is defined as
\[
S^{\ast} = \{s \in R \mid s_1 = ss_2 \text{ for some } s_1, s_2 \in S\}.
\]
A multiplicative subset $S$ of $R$ is called saturated if $S = S^{\ast}$. Note that $S^{\ast}$ is always a saturated multiplicative subset containing $S$. Clearly, rings with uniformly  $S^{\ast}$-$w$-Noetherian spectrum are exactly those with uniformly $S$-$w$-Noetherian spectrum.

We trivially have the following implications:
\[
\xymatrix@R=20pt@C=25pt{
&\boxed{$u$\text{-}$S$\text{-Noetherian spectrum}} \ar@{=>}[rd]&\\
{\boxed{\text{Noetherian spectrum}}}\ar@{=>}[ru]	\ar@{=>}[rd]& &{\boxed{$u$\text{-}$S$\text{-}$w$\text{-Noetherian spectrum}}}\\
	&\boxed{$w$\text{-Noetherian spectrum}} \ar@{=>}[ru]&.
}
\]

Rings with uniformly  $S$-$w$-Noetherian spectrum may not have uniformly  $S$-Noetherian spectrum.
\begin{example}  Let $R=k[x_1,x_2,\cdots]$ with $k$ a field. Set $S=\{1\}.$ Then  $R$ has uniformly  $S$-$w$-Noetherian spectrum as  $R$ is a UFD. However,  $R$ does not have uniformly  $S$-Noetherian spectrum, since the prime ideals $\langle x_1\rangle\subseteq\langle x_1,x_2\rangle\subseteq\cdots$ is not stationary.

\end{example}

Let $R$ be a ring and $S$ a multiplicative subset of $R$. For any $s\in S$, there is a  multiplicative subset $S_s=\{1,s,s^2,....\}$ of $S$. We denote by $M_s$ the localization of $M$ at $S_s$. Certainly, $M_s\cong M\otimes_RR_s$. A multiplicative subset of $R$ is said to be \emph{regular} if it consists of non-zero-divisors.

\begin{proposition}\label{local}
	Let $R$ be a ring, $S$ a regular multiplicative subset of $R$ and $s\in S$. Suppose $R$ has  uniformly $S$-$w$-Noetherian spectrum with respect to $s$. Then $R_s$ has $w$-Noetherian spectrum.
\end{proposition}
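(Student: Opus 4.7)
The plan is to use the equivalence $(1)\Leftrightarrow(2)$ of the main theorem and verify that every ascending chain of radical $w$-ideals of $R_s$ stabilises. Given such a chain $J_1 \subseteq J_2 \subseteq \cdots$ of radical $w$-ideals in $R_s$, I would contract to $R$ via $I_n := J_n \cap R$. Two preliminary identities are needed: that $I_n$ is a radical $w$-ideal of $R$, and that $I_n R_s = J_n$. The extension-of-contraction identity is routine, since for $j = r/s^m \in J_n$ one has $r/1 = s^m j \in J_n$, so $r \in I_n$ and $j \in I_n R_s$. Radicality of $I_n$ is immediate from $J_n$ being radical, and the $w$-ideal property of $I_n$ rests on the (standard) fact that $K \in \GV(R)$ implies $KR_s \in \GV(R_s)$ under the regular localization (see \cite{FK24}): if $Kx \subseteq I_n$ for some $x \in R$ and $K \in \GV(R)$, then $KR_s \cdot (x/1) \subseteq J_n$, whence $x/1 \in (J_n)_w = J_n$ and so $x \in J_n \cap R = I_n$.

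With these in hand, the hypothesis on $R$ together with the equivalence $(1)\Leftrightarrow(2)$ of the main theorem yields an index $k$ with $sI_n \subseteq I_k$ for all $n \geq k$. Extending back to $R_s$ and using that $s$ is a unit there, I obtain
\[
J_n \;=\; I_n R_s \;=\; (sI_n)R_s \;\subseteq\; I_k R_s \;=\; J_k,
\]
forcing $J_n = J_k$ for all $n \geq k$. Hence $R_s$ satisfies the ACC on radical $w$-ideals, and a final application of $(2)\Rightarrow(1)$ of the main theorem (now with the trivial multiplicative set $S = \{1\}$ in $R_s$) concludes that $R_s$ has $w$-Noetherian spectrum.

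The principal obstacle is the transport of $\GV$-ideals, and hence of $w$-ideals, along the regular localization $R \to R_s$. This in turn rests on the finite-presentation-type behaviour of $\GV$-ideals under flat base change, which ensures that the defining isomorphism $R \xrightarrow{\sim} \Hom_R(K, R)$ descends to $R_s \xrightarrow{\sim} \Hom_{R_s}(KR_s, R_s)$. Once this transport is granted, the remainder of the argument is an exercise in tracking contractions and extensions, with the role of $s$ being exactly to convert the $s$-ACC in $R$ into ordinary ACC in $R_s$; regularity of $S$ is used to guarantee that $R \hookrightarrow R_s$ is injective so that contractions behave well.
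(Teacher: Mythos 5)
Your proposal is correct, but it takes a genuinely different route from the paper. The paper argues directly from the definition of radical $S$-$w$-finiteness: every ideal of $R_s$ has the form $I_s$ for an ideal $I$ of $R$; by hypothesis $sI\subseteq\sqrt{F_w}$ for some finitely generated $F\subseteq I$; for each $r\in I$ one picks $J\in\GV(R)$ with $Jsr\subseteq\sqrt{F}$, localizes to get $J_s\frac{r}{1}\subseteq\sqrt{F_s}$, and since $J_s\in\GV(R_s)$ (by \cite[Lemma 4.3(1)]{WQ15}) and $s$ is a unit in $R_s$, concludes $\frac{r}{1}\in\sqrt{(F_s)_w}$, hence $I_s\subseteq\sqrt{(F_s)_w}$ is radically $w$-finite. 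You instead pass through the chain characterization (the equivalence $(1)\Leftrightarrow(2)$ of Theorem \ref{main-1}) on both sides: contract a chain of radical $w$-ideals of $R_s$ to radical $w$-ideals of $R$, invoke $s$-stationarity in $R$, and turn it into ordinary stationarity in $R_s$ because $s$ becomes a unit. Both arguments hinge on exactly the same transport fact $K\in\GV(R)\Rightarrow KR_s\in\GV(R_s)$ (it underlies your verification that contractions are $w$-ideals, and the paper's step $J_s\in\GV(R_s)$), so neither is more elementary in that respect. What the paper's proof buys is self-containedness: the proposition precedes Theorem \ref{main-1} in the paper, and the direct argument avoids the Zorn/ordinal machinery hidden behind the chain characterization; what yours buys is a clean contraction--extension picture once the main theorem is granted, and there is no circularity since the proof of Theorem \ref{main-1} does not use this proposition. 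One small remark: the real place where regularity of $S$ enters is the GV-transport lemma itself, not the injectivity of $R\to R_s$, which is only a notational convenience (one could work with preimages rather than intersections).
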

\begin{proof} Let $I_s$ be an ideal of $R_s$ with $I$ an ideal of $R$. Then there is a finitely generated ideal $F$ of $I$ such that if $sI \subseteq \sqrt{F_w}=(\sqrt{F})_w$ (Note that $\sqrt{I_w}=(\sqrt{I})_w$ for any ideal $I$ of $R$). For any $r\in I$, we have $sr\in \sqrt{F}_w$ Hence there is an ideal $J\in\GV(R)$ such that $Jsr\in \sqrt{F}$. Hence $J_s\frac{r}{1}\in \sqrt{F}_s=\sqrt{F_s}$. Since $J_s\in\GV(R_s)$ by \cite[Lemma 4.3(1)]{WQ15}, $\frac{r}{1}\in\sqrt{(F_s)_w}.$
Hence 	$I_s\subseteq \sqrt{(F_s)_w}$. Consequently, $R_s$ has $w$-Noetherian spectrum.
\end{proof}

\begin{proposition}\label{mutl-usm}
Let $n \geq 2$ be an integer, $R_{1},\dots ,R_{n}$ be  rings and $S_{1},\dots ,S_{n}$ be multiplicative subsets of $R_{1},\dots ,R_{n}$, respectively. Then
the following statements are equivalent.
\begin{enumerate}
	\item $R_{k}$ has $S_{k}$-$w$-Noetherian spectrum for all $k = 1,\dots ,n$.
	\item $\prod_{k=1}^{n} R_{k}$ has $\Bigl(\prod_{k=1}^{n} S_{k}\Bigr)$-$w$-Noetherian spectrum.
\end{enumerate}
\end{proposition}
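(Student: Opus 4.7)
The approach is to induct on $n$, reducing to the case $n=2$, and then to exploit the fact that the product decomposition $R := R_1 \times R_2$ respects ideals, finite generation, radicals, $\GV$-ideals, and consequently the $w$-envelope. Set $S := S_1 \times S_2$.

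The structural lemma I would establish first is that, using the orthogonal idempotents $e_1 = (1,0)$ and $e_2 = (0,1)$, every ideal $I$ of $R$ has the form $I_1 \times I_2$ with $I_j$ an ideal of $R_j$, and every finitely generated subideal of $I_1 \times I_2$ splits as $F_1 \times F_2$ with each $F_j \subseteq I_j$ finitely generated. Radicals commute trivially with finite products, so $\sqrt{I_1 \times I_2} = \sqrt{I_1} \times \sqrt{I_2}$.

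The main technical point---and what I expect to be the chief obstacle---is the product decomposition of the $w$-operation,
\[
(I_1 \times I_2)_w = (I_1)_w \times (I_2)_w,
\]
which reduces to the identification $\GV(R_1 \times R_2) = \{J_1 \times J_2 \mid J_1 \in \GV(R_1),\ J_2 \in \GV(R_2)\}$. One inclusion uses the canonical isomorphism $\Hom_R(J_1 \times J_2, R) \cong \Hom_{R_1}(J_1, R_1) \times \Hom_{R_2}(J_2, R_2)$ together with the observation that the natural map $R \to \Hom_R(J_1 \times J_2, R)$ decomposes as the product of the natural maps $R_k \to \Hom_{R_k}(J_k, R_k)$; the other inclusion uses that any finitely generated ideal of $R$ already splits as a product by the structural lemma, and then the same $\Hom$-decomposition forces each component to be a $\GV$-ideal. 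The equality for $w$-envelopes then follows from the defining formula $M_w = \{x \in E(M) \mid Jx \subseteq M \text{ for some } J \in \GV(R)\}$ by a direct componentwise computation.

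Once these tools are in place, the two implications are routine. For $(1) \Rightarrow (2)$: pick witnesses $s_k \in S_k$ for each $R_k$, set $s := (s_1, s_2) \in S$, and given $I = I_1 \times I_2$ choose finitely generated $F_k \subseteq I_k$ with $s_k I_k \subseteq \sqrt{(F_k)_w}$; then $F := F_1 \times F_2$ is a finitely generated subideal of $I$ satisfying $sI \subseteq \sqrt{F_w}$. For $(2) \Rightarrow (1)$: given a witness $s = (s_1, s_2) \in S$ for the product, fix $k$ and an ideal $I_k$ of $R_k$, and embed $I_k$ as the ideal $I_k' \subseteq R$ supported in the $k$-th slot. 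Any finitely generated subideal of $I_k'$ is forced to live in the $k$-th slot as well, so corresponds to some finitely generated $F_k \subseteq I_k$, and projecting the inclusion $s I_k' \subseteq \sqrt{F_w}$ onto the $k$-th coordinate yields $s_k I_k \subseteq \sqrt{(F_k)_w}$, as required.
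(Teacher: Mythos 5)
Your proposal is correct and follows essentially the same route as the paper's proof: decompose ideals, finitely generated subideals, radicals and the $w$-operation componentwise over the product ring and transfer the radically $S$-$w$-finite condition slot by slot. The only differences are cosmetic --- you reduce to $n=2$ by induction and spell out the identification $\GV(R_1\times R_2)=\GV(R_1)\times\GV(R_2)$ (which the paper uses implicitly in the equality $\prod_k\sqrt{(J_k)_{w_k}}=\sqrt{(\prod_k J_k)_w}$), and in $(2)\Rightarrow(1)$ you embed a single ideal in one slot rather than forming a full product ideal.
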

\begin{proof}
Write $R := \prod_{k=1}^{n} R_{k}$ and $S := \prod_{k=1}^{n} S_{k}$.
Let  $w$ be the $w$-operation of $R$, and $w_k$ be the $w$-operation of $R_k$ for each $k = 1,\dots ,n$.
	
	$(1) \Rightarrow (2)$
	Let $I$ be an ideal of $R$. Then for each $k = 1,\dots ,n$, we can find an
	ideal $I_{k}$ of $R_{k}$ such that $I = \prod_{k=1}^{n} I_{k}$. Since each $R_{k}$ has $S_{k}$-$w_k$-Noetherian spectrum,
	there exist an element $s_{k}\in S_{k}$ and a finitely generated subideal $J_{k}$ of $I_{k}$ such that
	\[
	s_{k}I_{k}\subseteq\sqrt{(J_{k})_{w_k}}.
	\]
	Note that $\prod_{k=1}^{n}\sqrt{(J_{k})_{w_k}}=\sqrt{(\prod_{k=1}^{n} J_{k})_w}$; so
	\[
	(s_{1},\dots ,s_{n})I\subseteq\sqrt{(\prod_{k=1}^{n} J_{k})_w}.
	\]
	Hence $I$ is radically $S$-$w$-finite. Thus $R$ has $S$-$w$-Noetherian spectrum.
	
	(2) $\Rightarrow$ (1)
	Suppose that $R$ has $S$-$w$-Noetherian spectrum and for each $k\in\{1,\dots ,n\}$, let $I_{k}$ be an ideal of $R_{k}$. Then $I := \prod_{k=1}^{n} I_{k}$ is an ideal of $R$; so we
	can find an element $(s_{1},\dots ,s_{n})\in S$ and a finitely generated subideal $J$ of $I$ such
	that $(s_{1},\dots ,s_{n})I\subseteq\sqrt{J_w}$. Note that for each $k = 1,\dots ,n$, there exists a finitely
	generated ideal $J_{k}$ of $R_{k}$ such that $J = \prod_{k=1}^{n} J_{k}$. Hence for each $k\in\{1,\dots ,n\}$,
	\[
	s_{k}I_{k}\subseteq\sqrt{(J_{k})_{w_k}}.
	\]
	Thus for each $k = 1,\dots ,n$, $I_{k}$ is radically $S_{k}$-$w_k$-finite, which means that
	$R_{k}$ has $S_{k}$-$w_k$-Noetherian spectrum.
\end{proof}

Rings with uniformly  $S$-$w$-Noetherian spectrum may not have uniformly  $w$-Noetherian spectrum.
\begin{example} Let $R_1$ be a ring with $w$-Noetherian spectrum, and $R_2$ be a ring without $w$-Noetherian spectrum. Set $R=R_1\times R_2$, and $S=\{1\}\times\{0,1\}$. Then $R$ has uniformly $S$-$w$-Noetherian spectrum  but does not have  $w$-Noetherian spectrum by Proposition \ref{mutl-usm}.
\end{example}

Let $P$ be a prime ideal of $R$. We call a ring $R$ has uniformly $P$-$w$-Noetherian spectrum provided that $R$ has uniformly $(R-P)$-$w$-Noetherian spectrum.

\begin{proposition}
The following statements are equivalent for a  ring $R$.
\begin{enumerate}
	\item $R$ has $w$-Noetherian spectrum.
		\item $R$ has uniformly $P$-$w$-Noetherian spectrum for any $P \in \mathrm{Spec}(R)$.
	\item $R$ has uniformly $M$-$w$-Noetherian spectrum for any $M \in \mathrm{Max}(R)$.
	\item $R$ has uniformly $P$-$w$-Noetherian spectrum for any $P \in w\mbox{-}\mathrm{Spec}(R)$.
	\item $R$ has uniformly $M$-$w$-Noetherian spectrum for any $M \in w\mbox{-}\mathrm{Max}(R)$.
\end{enumerate}
\end{proposition}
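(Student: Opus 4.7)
The plan is to split the five conditions into two cycles: $(1)\Rightarrow(2)\Rightarrow(3)\Rightarrow(1)$ and $(1)\Rightarrow(4)\Rightarrow(5)\Rightarrow(1)$. The forward implications are all immediate from the definitions: $(1)\Rightarrow(2)$ and $(1)\Rightarrow(4)$ hold by taking $s=1\in R\setminus P$ for every prime (resp.\ prime $w$-)ideal $P$, while $(2)\Rightarrow(3)$ and $(4)\Rightarrow(5)$ are simple restrictions of the quantifier from $\mathrm{Spec}(R)$ to $\mathrm{Max}(R)$, and from $w\mbox{-}\mathrm{Spec}(R)$ to $w\mbox{-}\mathrm{Max}(R)$, respectively. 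Hence the real content is the two local-to-global implications $(3)\Rightarrow(1)$ and $(5)\Rightarrow(1)$.

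For $(3)\Rightarrow(1)$, I would fix an arbitrary ideal $I$ of $R$ and, for each maximal ideal $M$, extract $s_M\in R\setminus M$ and a finitely generated $F_M\subseteq I$ with $s_M I\subseteq\sqrt{(F_M)_w}$. Since $\{s_M\mid M\in\mathrm{Max}(R)\}$ lies in the complement of every maximal ideal, the ideal it generates is all of $R$; so finitely many elements $s_{M_1},\dots,s_{M_n}$ admit a relation $\sum r_i s_{M_i}=1$. Setting $F:=F_{M_1}+\cdots+F_{M_n}$ (finitely generated, and contained in $I$), every $r\in I$ satisfies
\[
r=\sum_{i=1}^n r_i(s_{M_i}r)\in\sum_{i=1}^n r_i\sqrt{(F_{M_i})_w}\subseteq\sqrt{F_w},
\]
so $I\subseteq\sqrt{F_w}$ and $I$ is radically $w$-finite.

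For $(5)\Rightarrow(1)$, the classical partition-of-unity is unavailable and must be replaced by a $\GV$-theoretic substitute. Performing the same selection over $M\in w\mbox{-}\mathrm{Max}(R)$ produces an ideal $J:=\langle s_M\mid M\in w\mbox{-}\mathrm{Max}(R)\rangle$ that is contained in no maximal $w$-ideal. Since every proper $w$-ideal is contained in some maximal $w$-ideal (cf.\ \cite[Theorem 6.2.14]{FK24}), we must have $J_w=R$, which forces some finitely generated subideal $J_0=\langle s_{M_1},\dots,s_{M_n}\rangle$ to be a $\GV$-ideal. Taking $F:=F_{M_1}+\cdots+F_{M_n}$ gives $J_0 I\subseteq\sqrt{F_w}$. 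Now invoking the identity $\sqrt{I_w}=(\sqrt{I})_w$ already used in Proposition \ref{local}, we see that $\sqrt{F_w}=(\sqrt{F})_w$ is a $w$-ideal; the defining property of $w$-ideals (namely that $J_0 r\subseteq \sqrt{F_w}$ with $J_0\in\GV(R)$ forces $r\in\sqrt{F_w}$) then yields $I\subseteq\sqrt{F_w}$, completing the cycle.

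The main obstacle lies in $(5)\Rightarrow(1)$, where one must first convert the purely set-theoretic datum ``$s_M\notin M$'' into the algebraic statement that some finite subfamily $\langle s_{M_1},\dots,s_{M_n}\rangle$ belongs to $\GV(R)$, and then promote $J_0 I\subseteq\sqrt{F_w}$ to $I\subseteq\sqrt{F_w}$. Both steps depend critically on the fact that $\sqrt{F_w}$ is itself a $w$-ideal, which in turn rests on the non-trivial commutation $\sqrt{I_w}=(\sqrt{I})_w$.
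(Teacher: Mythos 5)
Your proposal is correct and follows essentially the same route as the paper: trivial forward implications, the partition-of-unity argument ($1=\sum r_is_{M_i}$) for $(3)\Rightarrow(1)$, and for $(5)\Rightarrow(1)$ the observation that $\langle s_M\mid M\in w\mbox{-}\Max(R)\rangle$ has $w$-closure $R$, hence contains a finitely generated $\GV$-subideal $J_0$ with $J_0I\subseteq\sqrt{F_w}$, so that the $w$-ideal property of $\sqrt{F_w}=(\sqrt{F})_w$ gives $I\subseteq I_w\subseteq\sqrt{F_w}$. This is exactly the paper's argument, so no further comparison is needed.
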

\begin{proof}  $(1)\Rightarrow (2) \Rightarrow (3) $ and $(1)\Rightarrow (4) \Rightarrow (5) $: These implications are trivial.

$(3) \Rightarrow (1)$: Let $I$ be an ideal of $R$. By (3), for any $M \in \mathrm{Max}(R)$, there exists an $s_M \in R \setminus M$ such
that $s_M I \subseteq \sqrt{(F_M)_w}$ for some finitely generated subideal $F_M$ of
$I$. Let $K$ be the ideal of $R$ generated by the set $\{s_M \mid M \in \mathrm{Max}(R)\}$.  Then $K = R$. Hence
$1 = s_{M_1}r_1 + \dots + s_{M_n}r_n$ for some $r_1,\dots,r_n \in R$. Set $F := \sum_{i=1}^n F_{M_i}$. Then $F$ is
a finitely generated subideal of $I$. Furthermore, $I = 1 \cdot I = (s_{M_1}r_1 + \dots + s_{M_n}r_n)I \subseteq
s_{M_1}I + \dots + s_{M_n}I \subseteq \sum_{i=1}^n \sqrt{(F_{M_i})_w} \subseteq \sqrt{\sum_{i=1}^n (F_{M_i})_w} \subseteq \sqrt{F_w} \subseteq \sqrt{I_w}$. Hence $R$ has
$w$-Noetherian spectrum.

$(5) \Rightarrow (1)$: Let $I$ be an ideal of $R$. By (5), for any $M \in  w\mbox{-}\mathrm{Max}(R)$, there exists an $s_M \in R \setminus M$ such
that $s_M I \subseteq \sqrt{(F_M)_w}$ for some finitely generated subideal $F_M$ of
$I$. Let $K$ be the ideal of $R$ generated by the set $\{s_M \mid M \in  w\mbox{-}\mathrm{Max}(R)\}$.  Then $K_w= R$. Hence there exists $\{s_{M_1},\dots, s_{M_n}\}$ such that $J:=\langle s_{M_1},\dots, s_{M_n}\rangle\in \GV(R)$. Set $F := \sum_{i=1}^n F_{M_i}$. Then $F$ is
a finitely generated subideal of $I$. Furthermore, $JI =
s_{M_1}I + \dots + s_{M_n}I \subseteq \sum_{i=1}^n \sqrt{(F_{M_i})_w} \subseteq \sqrt{\sum_{i=1}^n (F_{M_i})_w} \subseteq \sqrt{F_w} \subseteq \sqrt{I_w}$. Hence $I_w\subseteq \sqrt{F_w} \subseteq \sqrt{I_w}$
, implying that
$R$ has
$w$-Noetherian spectrum.
\end{proof}

\begin{lemma}\label{w-iso}
	Let $R$ be a ring, $S$ a multiplicative subset of $R$ and $s\in S$.	Let $I$ and $K$ be two ideals of $R$ such that $I_w=K_w$. Then $I$ is radically $S$-$w$-finite with respect to $s$ if and only if so is $K$.
\end{lemma}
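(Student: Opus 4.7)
The statement is symmetric in $I$ and $K$, so the plan is to prove only one direction: assume $I$ is radically $S$-$w$-finite with respect to $s$, say $sI \subseteq \sqrt{F_w}$ for some finitely generated subideal $F=\langle f_{1},\dots,f_{n}\rangle$ of $I$, and produce a finitely generated subideal $F'$ of $K$ such that $sK\subseteq \sqrt{F'_{w}}$. The strategy is to use the hypothesis $I_{w}=K_{w}$ together with the defining property of $\GV$-ideals to ``push'' $F$ inside $K$ without changing its $w$-envelope, and then to transport the relation $sI\subseteq \sqrt{F_{w}}$ to all of $sK$ by a second application of the $w$-envelope property.

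For the first step, each generator $f_{i}\in I\subseteq I_{w}=K_{w}$ admits some $J_{i}\in\GV(R)$ with $J_{i}f_{i}\subseteq K$. Taking the product $J:=J_{1}\cdots J_{n}\in \GV(R)$ gives $JF\subseteq K$, and I would set $F':=JF$, which is finitely generated since both $J$ and $F$ are. The key identity is $(JF)_{w}=F_{w}$: the inclusion $\subseteq$ is immediate from $JF\subseteq F$, and for the reverse, for any $f\in F$ we have $Jf\subseteq JF$ with $J\in\GV(R)$, so $f\in (JF)_{w}$, giving $F\subseteq (JF)_{w}$ and hence $F_{w}\subseteq (JF)_{w}$. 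Consequently $\sqrt{F'_{w}}=\sqrt{F_{w}}$.

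For the second step, pick any $k\in K$. Since $k\in K_{w}=I_{w}$, there exists $J'\in\GV(R)$ with $J'k\subseteq I$, and therefore
\[
J'(sk)=s(J'k)\subseteq sI\subseteq \sqrt{F_{w}}=\sqrt{F'_{w}}.
\]
As already used in the proof of Proposition \ref{local}, $\sqrt{F'_{w}}=(\sqrt{F'})_{w}$ is a $w$-ideal, so the defining property of the $w$-envelope forces $sk\in \sqrt{F'_{w}}$. This proves $sK\subseteq \sqrt{F'_{w}}$, i.e., $K$ is radically $S$-$w$-finite with respect to $s$.

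The argument is essentially bookkeeping about $w$-operations, and the only delicate point is to apply correctly the standard facts from \cite{FK24} that $\GV(R)$ is closed under finite products, that $(JM)_{w}=M_{w}$ for $J\in\GV(R)$, and that radicals commute with the $w$-envelope. Once these are in place, the proof is direct and the symmetry in $I,K$ disposes of the converse implication.
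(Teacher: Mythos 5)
Your proof is correct and takes essentially the same route as the paper: both use $F\subseteq I_w=K_w$ to produce a $\GV$-ideal $J$ with $JF\subseteq K$, observe that $\sqrt{(JF)_w}=\sqrt{F_w}$, and exploit the fact that $\sqrt{F_w}$ is a $w$-ideal to conclude $sK\subseteq\sqrt{(JF)_w}$. The only difference is cosmetic: you argue element-wise via the defining property of the $w$-envelope, whereas the paper compresses this into the single chain $sK\subseteq sK_w=sI_w\subseteq(sI)_w\subseteq(\sqrt{F_w})_w=\sqrt{F_w}=\sqrt{(JF)_w}$ with a citation to \cite{FK24}.
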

\begin{proof}  Suppose $I$ is radically $S$-$w$-finite with respect to $s$. $sI \subseteq \sqrt{F_w}$ for some finitely generated subideal $F=\langle f_1,\cdots,f_n\rangle$ of $I$.  Since $I_w= K_w$, there is $J\in\GV(R)$ such that $JF\subseteq K.$ Hence  $sK \subseteq sK_w=sI_w \subseteq (sI)_w\subseteq (\sqrt{F_w})_w=\sqrt{F_w}=\sqrt{(J_wF_w)_w}=\sqrt{(JF)_w},$  by \cite[[Proposition 6.2.3(5), Proposition 6.2.3]{FK24}. Since $JF$ is finitely generated,   $K$ is radically $S$-$w$-finite with respect to $s$.  The converse is similarly true.
\end{proof}

\begin{theorem}\label{main-1}
	Let $R$ be a ring, $S$ a multiplicative subset of $R$ and $s\in S$.
	Then the following statements are equivalent.
	\begin{enumerate}
		\item $R$ has uniformly $S$-$w$-Noetherian spectrum with respect to $s$.
		\item Each ascending chain of radical \mbox{$w$-ideal}s of $R$ is stationary   with respect to $s\in S$.
		\item Each radical \mbox{$(w$-$)$ideal} of $R$ is radically $S$-$w$-finite with respect to $s$.
		\item Each prime \mbox{$(w$-$)$ideal} of $R$ is radically $S$-$w$-finite with respect to $s$.
		\item Each  countably generated ideal  of $R$ is radically $S$-$w$-finite with respect to $s$.
	\end{enumerate}
\end{theorem}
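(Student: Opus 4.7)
The plan is to close the loop through two cycles, $(1) \Rightarrow (2) \Rightarrow (5) \Rightarrow (1)$ and $(1) \Rightarrow (3) \Rightarrow (4) \Rightarrow (3) \Rightarrow (1)$. The implications $(1) \Rightarrow (3)$, $(1) \Rightarrow (4)$, $(1) \Rightarrow (5)$, and $(3) \Rightarrow (4)$ are immediate from specialization to subclasses of ideals. The substantive steps are $(1) \Rightarrow (2)$, $(2) \Rightarrow (5)$, $(3) \Rightarrow (1)$, the Cohen-type $(4) \Rightarrow (3)$, and the main obstacle $(5) \Rightarrow (1)$.

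For $(1) \Rightarrow (2)$, given an ascending chain $I_1 \subseteq I_2 \subseteq \cdots$ of radical $w$-ideals, I form the union $I = \bigcup_n I_n$, write $sI \subseteq \sqrt{F_w}$ with finitely generated $F \subseteq I$, observe $F \subseteq I_k$ for some $k$, and conclude $sI_n \subseteq \sqrt{(I_k)_w} = \sqrt{I_k} = I_k$. For $(2) \Rightarrow (5)$, given $I = \langle r_1, r_2, \ldots\rangle$, set $I_n := \sqrt{\langle r_1, \ldots, r_n\rangle_w}$; verify $\bigcup_n I_n = \sqrt{I_w}$ (using that any $x \in \sqrt{I_w}$ has $Lx^m \subseteq I$ for some $L \in \GV(R)$, and the finitely many generators of $Lx^m$ land in $\langle r_1, \ldots, r_n\rangle$ for large $n$), so (2) yields $sI \subseteq s\sqrt{I_w} \subseteq I_k = \sqrt{\langle r_1, \ldots, r_k\rangle_w}$. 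For $(3) \Rightarrow (1)$, apply (3) to $\sqrt{I}$ to get $s\sqrt{I} \subseteq \sqrt{F_w}$ with finitely generated $F = \langle f_1, \ldots, f_m\rangle \subseteq \sqrt{I}$; choose $n_i$ with $f_i^{n_i} \in I$ and set $F' := \langle f_1^{n_1}, \ldots, f_m^{n_m}\rangle \subseteq I$. Since $\sqrt{F'} = \sqrt{F}$, one has $\sqrt{F'_w} = \sqrt{F_w}$, so $sI \subseteq \sqrt{F'_w}$ with $F'$ finitely generated and contained in $I$.

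For the Cohen-type $(4) \Rightarrow (3)$, I apply Zorn's Lemma to the poset $\mathcal{F}$ of radical $w$-ideals not radically $S$-$w$-finite with respect to $s$; unions of ascending chains in $\mathcal{F}$ lie in $\mathcal{F}$ (via the contrapositive of $(1) \Rightarrow (2)$), so if $\mathcal{F}$ is nonempty it admits a maximal element $P$. Suppose $P$ is not prime: $xy \in P$ with $x, y \notin P$. The radical $w$-ideals $A := \sqrt{(P + Rx)_w}$ and $B := \sqrt{(P + Ry)_w}$ strictly contain $P$, so each is radically $S$-$w$-finite: $sA \subseteq \sqrt{(F_A)_w}$ and $sB \subseteq \sqrt{(F_B)_w}$ for finitely generated $F_A \subseteq A$, $F_B \subseteq B$. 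One checks $A \cap B = P$ (from $L_1 L_2 a^{n+m} \subseteq (P+Rx)(P+Ry) \subseteq P$ with $L_1 L_2 \in \GV(R)$, forcing $a^{n+m} \in P_w = P$), plus $\sqrt{(F_A)_w} \cap \sqrt{(F_B)_w} = \sqrt{(F_A F_B)_w}$ and $F_A F_B \subseteq AB \subseteq A \cap B = P$, yielding $sP \subseteq sA \cap sB \subseteq \sqrt{(F_A F_B)_w}$ with $F_A F_B$ finitely generated, contradicting $P \in \mathcal{F}$. So $P$ is prime, and (4) then forces $P \notin \mathcal{F}$, again a contradiction; hence every radical $w$-ideal is radically $S$-$w$-finite, and Lemma \ref{w-iso} extends this to arbitrary radical ideals.

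The main obstacle is $(5) \Rightarrow (1)$, which I tackle by adapting the transfinite argument of Theorem \ref{count-us} to the $w$-setting. Well-order the generators as $I = \langle r_\alpha \mid \alpha < \kappa\rangle$ and define the ``essential index set''
\[
\mathcal{I} = \{\beta < \kappa \mid (sr_\beta)^n \notin \sqrt{\langle r_\alpha \mid \alpha < \beta\rangle_w} \text{ for all } n \geq 1\}.
\]
The key claim is that $sr_{\beta'} \in \sqrt{\langle r_\beta \mid \beta \in \mathcal{I}\rangle_w}$ for every $\beta' < \kappa$: if $\beta' \notin \mathcal{I}$, then $L_1 (sr_{\beta'})^{n_1} \subseteq \langle r_\alpha \mid \alpha < \beta'\rangle$ for some $L_1 \in \GV(R)$ and $n_1 \geq 1$; expanding as a sum over generators and iteratively replacing those with indices outside $\mathcal{I}$ produces a strictly descending ordinal sequence, which cannot exist. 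Once the claim holds, split cases: finite $\mathcal{I}$ gives $I$ radically $S$-$w$-finite directly; infinite $\mathcal{I}$ yields a countable strictly increasing sequence $\beta_1 < \beta_2 < \cdots$ in $\mathcal{I}$, and (5) applied to $\langle r_{\beta_i} \mid i \geq 1\rangle$ gives $(sr_{\beta_{k+1}})^n \in \sqrt{\langle r_{\beta_1}, \ldots, r_{\beta_k}\rangle_w} \subseteq \sqrt{\langle r_\alpha \mid \alpha < \beta_{k+1}\rangle_w}$, contradicting $\beta_{k+1} \in \mathcal{I}$. The hard part is the inductive unfolding: each step $L_i (sr_{\beta_i})^{n_i} \subseteq \langle r_\alpha \mid \alpha < \beta_i\rangle$ inflates the GV-ideal to a product $L_1 L_2 \cdots$ (still a GV-ideal) and multiplies exponents, so certifying that the accumulated combination genuinely witnesses $sr_{\beta'} \in \sqrt{\langle r_\beta \mid \beta \in \mathcal{I}\rangle_w}$ requires careful tracking beyond the classical case in Theorem \ref{count-us}.
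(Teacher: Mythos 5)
Your proposal is correct and follows essentially the same route as the paper: the same chain-union and countable-truncation arguments for $(1)\Rightarrow(2)\Rightarrow(5)$, the same Zorn-type maximal-counterexample-is-prime argument for $(4)\Rightarrow(3)$, the same power-raising trick for $(3)\Rightarrow(1)$, and the same transfinite index-set argument for $(5)\Rightarrow(1)$ (your membership condition $(sr_\beta)^n\notin\sqrt{\langle r_\alpha\mid\alpha<\beta\rangle_w}$ is equivalent to the paper's $\GV$-quantified one, and your acknowledged ``careful tracking'' of products of $\GV$-ideals and exponents is exactly what the paper carries out). The only differences are cosmetic, e.g.\ you prove primality of the maximal element via $A\cap B=P$ with $A=\sqrt{(P+Rx)_w}$ and $B=\sqrt{(P+Ry)_w}$, whereas the paper works with $P=\sqrt{(P+Rx)(P+Ry)}$ and products of radicals.
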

\begin{proof} The equivalence of $w$-ideals and  ideals in $(3),(4)$ can be deduced by Lemma \ref{w-iso}.
	
	$(1)\Rightarrow (3)\Rightarrow (4)$ Trivial.
	
 $(4)\Rightarrow (3)$ On contrary, assume that there exists a radical $w$-ideal $K$  of  $R$ which is not  radically $S$-$w$-finite with respect to $s$.   Set
 \begin{center}
 	$\mathcal{F}=\{$radical $w$-ideals of $R$  that are not radically $S$-$w$-finite type with respect to $s\}.$
 \end{center}
Since $K \in \mathcal{F}$,
 we have  $\mathcal{F} \neq \emptyset$.  Ordered $\mathcal{F}$ by inclusion.
Let $\{I_\lambda\}_{\lambda \in \Lambda}$  be a chain in $\mathcal{F}$  and set $I = \bigcup_{\lambda \in \Lambda} I_\lambda.$
Then $I $ is a radical $w$-ideal of $R.$
Indeed, let $r \in \sqrt{I}$, then $r^k \in I = \bigcup_{\lambda \in \Lambda} I_\lambda $ for some positive integer $k.$
So $r^k \in I_\lambda$ for some $\lambda \in \Lambda.$
 Thus $r \in \sqrt{I_\lambda} = I_\lambda \subseteq I.$ And note that the directed unions of $w$-ideals is also a $w$-ideal (see \cite[Proposition 6.1.19]{FK24}). We claim that $I \in \mathcal{F}$.
Assume that $I \notin \mathcal{F}.$
There exists a finitely generated subideal $J$ of $I$  such that
 $sI \subseteq \sqrt{J_w}$. Since $J$ is finitely generated, there exists $\lambda \in \Lambda$ such that $J \subseteq I_\lambda$, and so  $sI \subseteq sI_\lambda \subseteq \sqrt{J_w} \subseteq \sqrt{(I_\lambda)_w}$,  a contradiction.
Hence, by Zorn's Lemma, there is a maximal element $P$ of $\mathcal{F}$.

We claim that the maximal element $P$ of $\mathcal{F}$ is a prime ideal of  $R.$ Indeed, on contrary, assume that $P$ is not a prime ideal of $R.$
 There exist $a, b \in R \setminus P$ such that $ab \in P.$
We put $I := P + aR$ and $K := P + bR$.
Since  $IK = P^2 + (aR)P + (bR)P + (ab)R \subseteq P,$ we have $\sqrt{IK} \subseteq \sqrt{P} = P.$
 Also $P \subseteq I \cap K \subseteq \sqrt{I \cap K} = \sqrt{IK},$ thus $P = \sqrt{IK}$.
On the other hand, $P \subsetneq I \subseteq \sqrt{I}$ and $P \subsetneq K \subseteq \sqrt{K}.$
Then by maximality of $P,$ there exist finitely generated subideals $I'$  of  $I$ and $K'$ of $K$ such that  $s\sqrt{I} \subseteq \sqrt{(I')_w}$  and  $s\sqrt{K} \subseteq \sqrt{(K')_w}$.
Let $x \in \sqrt{I}$ and $y \in \sqrt{K}$. Then $sx \in \sqrt{(I')_w}$ and $sy \in \sqrt{(K')_w}$,  which implies that  $(sx)^k \in (I')_w$ and $(sy)^{k'} \in (K')_w$ for some positive integers $ k, k'$.
Thus there exists $L_1,L_2\in\GV(R)$ such that  $(L_1L_2sxy)^{k+k'} \subseteq  I'K'$,  so $ L_1L_2 sxy \in \sqrt{I'K'}$. Hence $ sxy \in (\sqrt{I'K'})_w=\sqrt{(I'K')_w}$
 This shows that $s\sqrt{I}\sqrt{K} \subseteq \sqrt{(I'K')_w}$.
Thus $sP \subseteq s\sqrt{IK} = s\sqrt{I}\sqrt{K} \subseteq \sqrt{(I'K')_w},$  which  contradicts the maximality of $P$.
So $P$ is a prime ideal of $R$  which is not radically $S$-$w$-finite with respect to  $s$,
a contradiction.

 $(3)\Rightarrow (1)$  Let  $I$ be an ideal of $R$. By (3), $\sqrt{I}$ is radically $S$-$w$-finite with respect to  $s$, then $s\sqrt{I}\subseteq  \sqrt{(a_1,\dots,a_n)_w}$ for some $a_1,\dots,a_n \in \sqrt{I}$.
 So for all $1 \le i \le n$, $a_i^{r_i} \in I$ for some positive integer $r_i$. For each $1 \le i \le n$, let
 $b_i = a_i^{r_i}$ and let $F := (b_1,\dots,b_n)$. Then $F$ is a finitely generated subideal of $I$.
 Moreover, $sI \subseteq s \sqrt{I} \subseteq \sqrt{(a_1,\dots,a_n)_w} \subseteq \sqrt{F_w} \subset \sqrt{I_w}$.


	 $(1)\Rightarrow (2)$
Let $I_1\subseteq I_2\subseteq I_3\subseteq\cdots$ be ascending chain of radical $w$-ideals of $R$. Let $I=\bigcup\limits_{i=1}^{\infty} I_i$. Then $I$ is radically $S$-$w$-finite with respect to $s$, that is, $sI \subseteq \sqrt{J_w}$ for some finitely generated subideal $J$ of $I$. Assume that $J\subseteq \bigcup\limits_{i=1}^{k} I_i$. Then $$sI \subseteq \sqrt{(\bigcup\limits_{i=1}^{k} I_i)_w}=\sqrt{\bigcup\limits_{i=1}^{k} I_i}=\bigcup\limits_{i=1}^{k}\sqrt{I_i}=\bigcup\limits_{i=1}^{k}I_i=I_k.$$
Consequently, $sI_n\subseteq I_k$ for any $n\geq k.$

$(2)\Rightarrow (5)$ Let $I=\langle r_1,r_2,\cdots\rangle$ be a countably generated ideal of $R$. Set  $I_n=\sqrt{\langle r_1,r_2,\cdots,r_n\rangle}$. Then $\{(I_n)_w\mid n\geq 1\}$ is an ascending chain of radical $w$-ideals of $R$ (Note that $(\sqrt{I})_w = \sqrt{I_w}$ for each ideal $I$ of $R$). So there is a positive integer $k$ such that $s(I_n)_w\subseteq (I_k)_w$ for all $n\geq k$. Note that $\sqrt{I_w}=\sqrt{I}_w=\bigcup\limits_{n\geq 1}(I_n)_w$. Now,  we claim that $s\sqrt{I_w}\subseteq (I_k)_w$. Indeed, let $r\in \sqrt{I_w}$. Then $r\in (I_n)_w$ for some $n\geq 1$. So $sr\in s(I_n)_w\subseteq (I_k)_w$. Hence $sI\subseteq sI_w\subseteq s\sqrt{I_w}\subseteq (I_k)_w$. Consequently, $I_w$ is radically $S$-$w$-finite with respect to $s$.

$(5)\Rightarrow (1)$ Let $I = \langle r_\alpha \mid \alpha < \kappa \rangle$ be an ideal of $R$, where $\kappa$ is a cardinal. Consider the index set
\[
\mathcal{I} = \left\{ \beta \mid  J(s r_\beta)^n \not\subseteq \sqrt{\langle r_\alpha \mid \alpha < \beta \rangle}\  \mbox{for all}\ n\geq 1\ \mbox{and all}\ J\in \GV(R)\} \right  .
\]

We claim that  \textbf{for any $\boldsymbol{r_{\beta'} \in I}$, we have $\boldsymbol{Jsr_{\beta'} \in \sqrt{\langle r_\beta \mid \beta \in \mathcal{I} \rangle}}$ for some $\boldsymbol{J\in \GV(R)}$.} Since  the first ordinal is in $\mathcal{I}$, we have $\mathcal{I}$ is nonempty. Let $r_{\beta'} \in I$. If $\beta' \in \mathcal{I}$, we are done. Otherwise, there exists $n_1 \geq 1$ and $J_1=\langle j_{1,1},\dots, j_{1,l_1}\rangle\in \GV(R)$ such that
\[
(J_1s r_{\beta_1})^{n_1} \subseteq \langle r_\alpha \mid \alpha < \beta_1 \rangle.
\]
So $(j_{1,l}s r_{\beta_1})^{n_1}=\sum k_ir_{\beta_{2,i}}$  with each $\beta_{2,i} < \beta_1$, each $k_i\in R$ and each $l=1,\dots,l_1$. If all such $\beta_{2,i}$s belong to $\mathcal{I}$, we are done. Otherwise, let $\beta_{2,i_1},\cdots \beta_{2,i_{m_2}}$ be the finite set of all $\beta_{2,i}$s which are not in $\mathcal{I}$. Then, for all $k=1,\dots,m_2$, there exists  $J_2=\langle j_{2,1},\dots, j_{2,l_2}\rangle\in \GV(R)$ such that
\[
(J_2s r_{\beta_{2,i_k}})^{n_2} \subseteq \langle r_\alpha \mid \alpha <\beta_2 \rangle
\]
for some $n_2 \geq 1$ and some $\beta_2 < \beta_1$.  Hence for all $l=1,\dots,l_1$ and $l'=1,\dots,l_2$, we have \begin{align*}(j_{1,l}j_{2,l'}s r_{\beta_1})^{n_1n_2m_2}&=(j_{1,l}j_{2,l'}s\sum\limits_{\beta_{2,i}\in \mathcal{I}} k_ir_{\beta_{2,i}}+j_{1,l}j_{2,l'}s\sum\limits_{\beta_{2,i}\not\in \mathcal{I}} k_ir_{\beta_{2,i}})^{n_2m_2}\\
	&=(j_{1,l}j_{2,l'}s\sum\limits_{\beta_{2,i}\not\in \mathcal{I}} k_ir_{\beta_{2,i}})^{n_2m_2}+M,\end{align*}
where $M$ is a multiple of $\sum\limits_{\beta_{2,i}\in \mathcal{I}} k_ir_{\beta_{2,i}}$. So $M$ is in $\langle r_\beta \mid \beta \in \mathcal{I} \rangle$. Note that $(j_{1,l}j_{2,l'}s\sum\limits_{\beta_{2,i}\not\in \mathcal{I}} k_ir_{\beta_{2,i}})^{n_2m_2}$ is the linear combination of $(j_{1,l}j_{2,l'}sk_ir_{\beta_{2,i}})^{n_2}$, and so is in $\langle r_\alpha \mid \alpha <\beta_2 \rangle$.
Hence $(J_1J_2s r_{\beta_1})^{n_1n_2m_2}$ is contained in $\langle r_\beta \mid \beta \in \mathcal{I}\ \mbox{or}\  \beta<\beta_2 \rangle$. Continuing this process, there exists an sequence $r_{\beta_1}, r_{\beta_2},r_{\beta_3},\dots$ with $\beta_1>\beta_2>\beta_3>\dots$. Note that $\{\beta_i\mid i\geq 1\}$ is a subset of the well-ordered set $\kappa$. So $\{\beta_i\mid i\geq 1\}$  has a minimal element, which is a contradiction. Consequently, the claim holds.

If $\mathcal{I}$ is finite, then $I$ is radically $S$-$w$-finite with respect to $s$, completing the proof. Now assume that $\mathcal{I}$ is infinite. Consider a countable subset
\[
\mathcal{I}' = \{ \beta_1 < \beta_2 < \beta_3 < \cdots \} \subseteq \mathcal{I}.
\]
Since $\langle r_\beta \mid \beta \in \mathcal{I}' \rangle$ is radically $S$-$w$-finite with respect to $s$ by assumption, there exists a finite subset $\mathcal{I}''=\{\beta_1,\beta_2,\dots,\beta_k\} \subseteq \mathcal{I}'$ such that
\[
s \langle r_\beta \mid \beta \in \mathcal{I}' \rangle \subseteq \sqrt{\langle r_\beta \mid \beta \in \mathcal{I}'' \rangle_w}.
\]
So $(Js\beta_{k+1})^{n_{k+1}}\subseteq \langle r_\beta \mid \beta \in \mathcal{I}''\rangle$ for some $n_{k+1}>1$ and some  $J\in\GV(R)$. Consequently, $\beta_{k+1}\not\in \mathcal{I}$, which is a contradiction.

Consequently, $I$ is radically $S$-$w$-finite with respect to $s$, and it follows that $R$ has the uniformly $S$-$w$-Noetherian spectrum property with respect to $s$.
\end{proof}

\begin{corollary}\label{char-w-noe-spect}
	Let $R$ be a ring. Then the following statements are equivalent.
\begin{enumerate}
	\item $R$ has $w$-Noetherian spectrum.
	\item Each ascending chain of radical $w$-ideals of $R$ is stationary.
	\item Each radical \mbox{$(w$-$)$ideal} of $R$ is radically $w$-finite.
	\item Each prime \mbox{$(w$-$)$ideal} of $R$ is radically $w$-finite.
	\item Each  countably generated ideal  of $R$ is radically $w$-finite.
\end{enumerate}
\end{corollary}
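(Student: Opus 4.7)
The plan is to derive Corollary~\ref{char-w-noe-spect} as the direct specialization of Theorem~\ref{main-1} to the trivial multiplicative subset $S = \{1\}$ with distinguished element $s = 1$. Under this substitution each of the five conditions of Theorem~\ref{main-1} collapses term-by-term to the corresponding condition of the corollary. Indeed, the clause \emph{radically $S$-$w$-finite with respect to $s$} becomes \emph{radically $w$-finite}: the defining inclusion $1 \cdot I \subseteq \sqrt{F_w}$ combined with $F \subseteq I$ forces the sandwich $I \subseteq \sqrt{F_w} \subseteq \sqrt{I_w}$, which is precisely the $w$-analogue of Ribenboim's classical radical-finiteness condition. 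Similarly, \emph{stationary with respect to $s \in S$} reduces to ordinary stationarity of the ascending chain (since $1 \cdot I_n \subseteq I_k$ with $I_n \supseteq I_k$ yields $I_n = I_k$), and \emph{uniformly $S$-$w$-Noetherian spectrum with respect to $s$} becomes $w$-Noetherian spectrum as introduced at the beginning of Section~3.

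With this dictionary in place, the five-way equivalence of the corollary is read off directly from the five-way equivalence already established in Theorem~\ref{main-1}. The argument is essentially one line: take $S = \{1\}$ and $s = 1$ throughout Theorem~\ref{main-1}. There is no genuine obstacle to overcome; the only piece of bookkeeping requiring a moment's care is verifying that the notion of \emph{radically $w$-finite} defined at the start of Section~3 agrees with the $s=1$ instance of \emph{radically $S$-$w$-finite}, which is immediate from comparing the two definitions and using $F \subseteq I$.
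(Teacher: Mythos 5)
Your proposal is correct and matches the paper's own proof, which is exactly the one-line specialization ``Take $S=\{1\}$ in Theorem \ref{main-1}.'' Your extra remark reconciling \emph{radically $w$-finite} with the $s=1$ case of \emph{radically $S$-$w$-finite} is harmless bookkeeping that the paper leaves implicit.
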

\begin{proof}
	Take  $S=\{1\}$ in Theorem \ref{main-1}.
\end{proof}

To build a connection between the $w$-operation case and ordinal case,
Wang and Kim {\cite{WK15}} introduced the notion of the \emph{$w$-Nagata ring}, $R\{x\}$, of $R$. It is a localization of $R[X]$ at the multiplicative closed set
$$S_w=\{f\in R[x]\,|\, c(f)\in \GV(R)\},$$
\noindent where $c(f)$ is the content of $f$. Note that $R\{x\}$ can naturally be seen as an extension of $R$.  The \emph{$w$-Nagata module} $M\{x\}$ of an $R$-module $M$ is defined as $M\{x\}=M[x]_{S_w}\cong M\bigotimes_R R\{x\}$.

\begin{lemma}\label{rad-w-Nag} Let $I$ be an ideal of $R$. Then 
$\sqrt{I}\{X\}=\sqrt{I\{X\}}.$
\end{lemma}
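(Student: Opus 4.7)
The plan is to reduce the identity to two standard facts: that radical commutes with polynomial extension, i.e.\ $\sqrt{I[X]} = \sqrt{I}[X]$, and that radical commutes with localization at any multiplicative subset. Since $R\{X\}$ is by definition the localization $R[X]_{S_w}$ of $R[X]$ at the multiplicative set $S_w = \{f\in R[X]\mid c(f)\in\GV(R)\}$, the ideal $I\{X\}$ equals $(I[X])_{S_w}$, and likewise $\sqrt{I}\{X\} = (\sqrt{I}[X])_{S_w}$.

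Concretely, I would first recall (or give a one-line argument using $\sqrt{I[X]} \supseteq \sqrt{I}[X]$ via the binomial theorem and coefficient-wise analysis of powers of a polynomial for the reverse inclusion) that $\sqrt{I[X]} = \sqrt{I}[X]$. Then, applying the elementary fact $(\sqrt{J})_T = \sqrt{J_T}$ for a multiplicative subset $T$ of a ring and an ideal $J$, with $T = S_w$ and $J = I[X]$, I would obtain
\[
\sqrt{I}\{X\} \;=\; (\sqrt{I}[X])_{S_w} \;=\; (\sqrt{I[X]})_{S_w} \;=\; \sqrt{(I[X])_{S_w}} \;=\; \sqrt{I\{X\}}.
\]

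There is no real obstacle here; the statement is essentially bookkeeping among three commuting operations (taking radicals, passing to polynomial rings, and localizing), and the only thing that must be invoked explicitly is the classical commutation of radical with polynomial extension. The step one might want to check carefully is that $S_w$-localization commutes with the radical operator — but this holds for arbitrary multiplicative subsets and follows directly from the definition of $\sqrt{\cdot}$ together with the universal property of localization.
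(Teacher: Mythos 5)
Your proof is correct, but it takes a more structural route than the paper. The paper argues element-wise inside $R\{X\}$: for $\sqrt{I}\{X\}\subseteq\sqrt{I\{X\}}$ it raises a fraction $\frac{f}{g}$ with $f\in\sqrt{I}[X]$ to a high power, and for the reverse inclusion it runs an induction on $\deg f$, peeling off the leading coefficient to show each coefficient of $f$ lies in $\sqrt{I}$ (this is in effect a hands-on proof of $\sqrt{I[X]}\subseteq\sqrt{I}[X]$, carried out after passing to the Nagata ring). You instead factor the statement through two standard commutation facts: $\sqrt{I[X]}=\sqrt{I}[X]$ and $\sqrt{J_T}=(\sqrt{J})_T$ for any multiplicative set $T$, applied with $T=S_w$ and $J=I[X]$, using that $I\{X\}=(I[X])_{S_w}$ and $\sqrt{I}\{X\}=(\sqrt{I}[X])_{S_w}$ by the definition of the Nagata module. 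This buys you two things: modularity (the $w$-Nagata construction plays no role beyond being a localization of $R[X]$, so no degree induction specific to $R\{X\}$ is needed), and it cleanly handles the denominators --- the paper's reverse inclusion writes ``$(\frac{f}{g})^m\in I\{X\}$ with $f^m\in I[X]$'', silently absorbing the extra $S_w$-factor that a membership in a localized ideal produces, whereas your localization lemma $\sqrt{(I[X])_{S_w}}=(\sqrt{I[X]})_{S_w}$ deals with that factor correctly. Only two small caveats: you should state (rather than assume) the identification of the module localization $(I[X])_{S_w}$ with the extended ideal $I[X]R\{X\}$, which holds because localization is exact and extension along a localization coincides with localizing the ideal; and the commutation of radical with localization is most directly verified from the explicit description of fractions rather than from the universal property, but it is standard either way.
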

\begin{proof} Let $\frac{f}{g}\in \sqrt{I}\{X\}$ with $f=a_nX^n+\cdots+a_0\in \sqrt{I}[X]$ and $c(g)\in \GV(R)$. Then there exists $m\geq 1$ such that $a_i^m\in I$ for each $i=0,\cdots,n$. So $f^{m(n+1)}\in I[x]$. Hence $(\frac{f}{g})^{m(n+1)}\in  I\{X\}.$ Consequently,  $\frac{f}{g}\in \sqrt{I\{X\}}$. Hence$\sqrt{I}\{X\}\subseteq \sqrt{I\{X\}}$.
	
	On the other hand, then $a_n^m\in$ obviously. Let $\frac{f}{g}\in \sqrt{I\{X\}}$. Then there exists $m\geq 1$ such that $(\frac{f}{g})^m\in I\{X\}$ with $f^m\in I[X]$ and $c(g^m)\in \GV(R)$. Since $c(g^m)\subseteq c(g)$, we have $c(g)\in\GV(R).$ Assume $f=a_nX^n+\cdots+a_0.$ We will show each $a_i\in\sqrt{I}$ by induction on $n$. If $n=0$, it is trivial. For general case, let $f_1=f-a_nX^n$. Then $f^m=\sum\limits_{i=0}^mC_m^ia_n^{m-i}X^{n(m-i)}f_1^i=a_n^mX^{nm}+\cdots\in I[X].$ Then $a_n^m\in I$, and thus  $a_n\in\sqrt{I}$.
	Note that $f_1^{2m}\in I[x]$ and $\deg(f_1)<\deg(f)$. One can show that each $a_i\in\sqrt{I}$ by induction. So $f\in\sqrt{I}[x]$.  Hence$\sqrt{I}\{X\}\supseteq \sqrt{I\{X\}}$.
	
\end{proof}

\begin{proposition} \label{swf-nag}
	Let $R$ be a ring, $S$ a multiplicative subset of $R$ and $s\in S$.	 Then
	an ideal $I$ of $R$ is radically $S$-$w$-finite with respect to $s$ if and only if $I\{X\}$ is  radically $S$-finite  with respect to $s$. 	
\end{proposition}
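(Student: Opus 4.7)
The plan is to transfer the radical-$S$-$w$-finiteness statement between $R$ and $R\{X\}$ by combining Lemma \ref{rad-w-Nag} with the observation that $\GV$-ideals of $R$ become the unit ideal in $R\{X\}$. Indeed, if $J=\langle j_1,\dots,j_l\rangle\in\GV(R)$, then $g=j_1+j_2X+\cdots+j_lX^{l-1}$ has content $c(g)=J\in\GV(R)$, so $g$ is invertible in $R\{X\}$; since $g\in J[X]$, this gives $J\{X\}=R\{X\}$. From this I immediately obtain the key identity $(F_w)\{X\}=F\{X\}$ for every ideal $F$ of $R$: any $x\in F_w$ satisfies $Jx\subseteq F$ for some $J\in\GV(R)$, so $x\in J\{X\}\cdot x\subseteq F\{X\}$.

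For the forward direction, starting from $sI\subseteq\sqrt{F_w}$ with $F\subseteq I$ finitely generated, I extend to $R\{X\}$ and chain
\[
sI\{X\}\subseteq \sqrt{F_w}\{X\}=\sqrt{(F_w)\{X\}}=\sqrt{F\{X\}}
\]
via Lemma \ref{rad-w-Nag} and the identity above, using that $F\{X\}$ is a finitely generated subideal of $I\{X\}$ generated by the generators of $F$.

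The reverse direction is where the real work lies. Given $sI\{X\}\subseteq\sqrt{H}$ with $H$ a finitely generated subideal of $I\{X\}$, I first normalize: each generator of $H$ has the form $f_j/g_j$ with $f_j\in I[X]$ and $g_j\in S_w$ invertible in $R\{X\}$, so I may replace $H$ by $\langle f_1,\dots,f_m\rangle R\{X\}$. Letting $F\subseteq I$ be the finitely generated ideal spanned by all coefficients of the $f_j$'s, one has $H\subseteq F\{X\}$, hence
\[
sI\{X\}\subseteq \sqrt{F\{X\}}=\sqrt{(F_w)\{X\}}=\sqrt{F_w}\{X\}.
\]

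The main obstacle is the final contraction back to $R$. For $r\in I$, writing $sr=h/k$ with $h\in\sqrt{F_w}[X]$ and $c(k)\in\GV(R)$, multiplying out gives $sr\cdot c(k)\subseteq c(sr\cdot k)=c(h)\subseteq \sqrt{F_w}$. The decisive point is that $\sqrt{F_w}=(\sqrt{F})_w$ is itself a $w$-ideal, so combined with $c(k)\in\GV(R)$ this forces $sr\in (\sqrt{F_w})_w=\sqrt{F_w}$. This ultimately shows $sI\subseteq\sqrt{F_w}$, completing the argument. The only subtlety I anticipate is keeping the content manipulations clean and invoking the $w$-closedness of $\sqrt{F_w}$ at the right moment; everything else reduces to Lemma \ref{rad-w-Nag} and the triviality $(F_w)\{X\}=F\{X\}$.
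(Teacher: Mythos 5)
Your proof is correct and follows essentially the same route as the paper: both directions rest on Lemma \ref{rad-w-Nag}, on replacing a finitely generated ideal of $I\{X\}$ by the (content) ideal of coefficients of polynomial representatives, and on the fact that $\GV$-ideals become the unit ideal in $R\{X\}$ so that $(F_w)\{X\}=F\{X\}$ and the contraction back to $R$ lands in the $w$-ideal $\sqrt{F_w}=(\sqrt{F})_w$. The only difference is cosmetic: you make explicit (via the polynomial $j_1+j_2X+\cdots+j_lX^{l-1}$ and the content computation $sr\cdot c(k)\subseteq\sqrt{F_w}$) two steps that the paper performs by citing standard Nagata-ring facts.
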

\begin{proof}
	Suppose $I$ is radically $S$-$w$-finite with respect to $s$. Then there exists a finitely  generated subideal $F$ of $I$ such that $sI\subseteq \sqrt{F_w}$. Then  $sI\{X\}\subseteq \sqrt{F_w}\{X\}=\sqrt{F_w\{X\}}=\sqrt{F\{X\}}$ by Lemma \ref{rad-w-Nag}.  Hence  $I\{X\}$ is radically $S$-$w$-finite  with respect to $s$. 	
	
On the other hand, suppose $I\{X\}$ is radically $S$-finite  with respect to $s$. Then $sI\{X\}\subseteq\sqrt{A_{S_w}}$ for some finitely generated subideal  $A$  of $I[X]$. So $c(A)$ is a finitely generated subideal of $I$. Since $A\subseteq c(A)[\textsc{X}]\subseteq I[X]$,
	we have $sI\{X\} \subseteq\sqrt{A_{S_w}}\subseteq \sqrt{c(A)\{X\}}=\sqrt{c(A)}\{X\}$ by Lemma \ref{rad-w-Nag}.  Consequently, we have $sI\subseteq(sI)_w\subseteq (\sqrt{c(A)}))_w=\sqrt{c(A)_w}$. Hence, $I$ is radically $S$-$w$-finite  with respect to $s$.	
\end{proof}

\begin{theorem}\label{main-2}
	Let $R$ be a ring, $S$ a  multiplicative subset of $R$ and $s\in S$.
	Then  the following statements are equivalent.
	\begin{enumerate}
		\item $R$ has uniformly $S$-$w$-Noetherian spectrum.
		\item  $R[X]$ has  uniformly $S$-$w$-Noetherian spectrum.
		\item
		$R\{X\}$ has  uniformly $S$-Noetherian spectrum.
	\end{enumerate}
\end{theorem}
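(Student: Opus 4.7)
The plan is to establish the cycle $(1)\Rightarrow(2)\Rightarrow(3)\Rightarrow(1)$, using Theorem~\ref{main-1}(4) to reduce the polynomial-ring case to prime ideals and Proposition~\ref{swf-nag} to transfer between $R$ and $R\{X\}$.

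For $(1)\Rightarrow(2)$, by Theorem~\ref{main-1}(4) applied to $R[X]$ it suffices to show that every prime ideal $\mathfrak{q}$ of $R[X]$ is radically $S$-$w$-finite with respect to $s$. Setting $\mathfrak{p}=\mathfrak{q}\cap R$, the classical fibre description of $\Spec(R[X])\to\Spec(R)$ gives either $\mathfrak{q}=\mathfrak{p}[X]$ or $\mathfrak{q}=\mathfrak{p}[X]+hR[X]$ for some $h\in R[X]$. By (1) there is a finitely generated $F\subseteq\mathfrak{p}$ with $s\mathfrak{p}\subseteq\sqrt{F_w^R}$. Using the standard identities $\sqrt{I}[X]=\sqrt{I[X]}$ and $(I[X])_w^{R[X]}=(I_w^R)[X]$ for $I\subseteq R$, we extend to $s\mathfrak{p}[X]\subseteq\sqrt{(FR[X])_w^{R[X]}}$ with $FR[X]$ finitely generated in $\mathfrak{p}[X]$; in the second case, adjoining $h$ and combining $\sqrt{A}+\sqrt{B}\subseteq\sqrt{A+B}$ with monotonicity of the $w$-envelope yields $s\mathfrak{q}\subseteq\sqrt{(FR[X]+hR[X])_w^{R[X]}}$, as required.

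For $(2)\Rightarrow(3)$, let $\tilde I$ be an ideal of $R\{X\}$ and set $A=\tilde I\cap R[X]$, so that $\tilde I=AR\{X\}$. Pick a finitely generated $F\subseteq A$ with $sA\subseteq\sqrt{F_w^{R[X]}}$ by (2). Since radicals commute with localization, $s\tilde I\subseteq\sqrt{F_w^{R[X]}}R\{X\}=\sqrt{F_w^{R[X]}R\{X\}}$, and the crucial identity $F_w^{R[X]}R\{X\}=FR\{X\}$ follows from the fact that every $J\in\GV(R[X])$ becomes the unit ideal in $R\{X\}$---concretely, a well-spaced combination $\sum_if_iX^{N_i}$ of generators of $J$ has content $c(J)\in\GV(R)$ and hence lies in $S_w\cap J$. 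This gives $s\tilde I\subseteq\sqrt{FR\{X\}}$ with $FR\{X\}\subseteq\tilde I$ finitely generated. Finally, $(3)\Rightarrow(1)$ is immediate from Proposition~\ref{swf-nag}: for any ideal $I\subseteq R$, $I\{X\}=IR\{X\}$ is radically $S$-finite by (3), whence $I$ is radically $S$-$w$-finite.

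The main obstacle is the content-$\GV$ compatibility invoked in $(2)\Rightarrow(3)$, namely that $c(J)\in\GV(R)$ whenever $J\in\GV(R[X])$; this is standard in the theory of $w$-Nagata rings (cf.\ \cite{FK24,WK15}) but must be cited carefully. The analogous polynomial-extension identity $(I[X])_w^{R[X]}=(I_w^R)[X]$ used in $(1)\Rightarrow(2)$ is similar and also classical; once both are in hand, the three parts of the cycle are essentially bookkeeping.
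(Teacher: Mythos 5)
Your reduction of $(1)\Rightarrow(2)$ to prime ideals of $R[X]$ via Theorem \ref{main-1}(4) is legitimate, but the step treating a prime $\mathfrak q$ of $R[X]$ over $\mathfrak p=\mathfrak q\cap R$ contains a genuine gap. The fibre of $\Spec(R[X])\to\Spec(R)$ over $\mathfrak p$ is $\Spec(\kappa(\mathfrak p)[X])$, so a prime $\mathfrak q\neq\mathfrak p[X]$ is the \emph{contraction} of a principal prime of $\kappa(\mathfrak p)[X]$; it is \emph{not} of the form $\mathfrak p[X]+hR[X]$ in general. All the fibre description gives is that for each $g\in\mathfrak q$ there is some $c\in R\setminus\mathfrak p$ with $cg\in\mathfrak p[X]+hR[X]$, and this $c$ cannot be absorbed into the radical, the $w$-envelope, or the fixed $s\in S$. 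Concretely, let $R=\mathbb{Z}[\sqrt{-5}]$, $S=\{1\}$ (so the theorem is trivially true for this $R$, which is Noetherian), $\mathfrak p=0$, and let $\mathfrak q=\{f\in R[X]\mid f(\alpha)=0\}$ with $\alpha=(1+\sqrt{-5})/2$; then $\mathfrak q$ contains $2X-1-\sqrt{-5}$ and $2X^2-2X+3$, and $\mathfrak q$ is not principal, so $\mathfrak q\neq \mathfrak p[X]+hR[X]$ for every $h$. Worse, for any $h\in\mathfrak q$ whose image generates the fibre prime of $\kappa(\mathfrak p)[X]$ (which is what your recipe supplies), one has $h=\gamma(X-\alpha)$, and a content computation (using that $(2,1+\sqrt{-5})$ is not principal) shows $c(h)$ lies in some maximal ideal $P$ of $R$; hence $h\in P[X]$, which is a prime divisorial (so $W$-) ideal of $R[X]$ not containing $\mathfrak q$ (since $2X^2-2X+3$ has unit content), whence $\mathfrak q\not\subseteq\sqrt{(hR[X])_W}$. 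Since here $F=0$, the containment $s\mathfrak q\subseteq\sqrt{(FR[X]+hR[X])_W}$ you assert simply fails. So the implication $(1)\Rightarrow(2)$ cannot be obtained by this bookkeeping: one must produce extra generators beyond $\mathfrak p$ and a single $h$, and that is exactly what the paper's argument does via a Zorn's lemma choice of a $w$-ideal $P$ of $R[X]$ maximal among those not radically $S$-$w$-finite, the proof that $P$ is prime, and the minimal-degree/leading-coefficient division step $a^m g=fq+r$, $\deg r<\deg f$, leading to $sP\subseteq\sqrt{(B+(f)+KR[X])_W}$. That Ohm--Pendleton/Cohen-type mechanism is the heart of this implication and is missing from your proposal.

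Your $(2)\Rightarrow(3)$ and $(3)\Rightarrow(1)$ are essentially the paper's own arguments: localizing at $S_w$ kills the relevant $\GV$-ideals (your content remark needs a precise citation from \cite{FK24,WK15}, but the idea matches the paper), and $(3)\Rightarrow(1)$ is exactly Proposition \ref{swf-nag}. Since the substantive content of the theorem sits in $(1)\Rightarrow(2)$, the proposal as it stands does not prove the statement.
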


\begin{proof} We denote by the $w$-operation of $R$ by $w$, and the $w$-operation of $R[x]$ by $W$, respectively.
	
$(1)\Rightarrow (2)$	Suppose that $R$ has the uniformly $S$-$w$-Noetherian spectrum property with respect to some $s \in S$. We
	claim that $R[X]$ also has the uniformly $S$-$w$-Noetherian spectrum property with respect to $s$.
	
	Let $P$ be a $w$-ideal of $R[X]$ that is maximal among those that are not radically $S$-$w$-finite with
	respect to $s$. Then $P$ is a prime ideal of $R[X]$. Indeed, suppose $xy \in P$ with $x \notin P$ and $y \notin P$.
	Then both $P + Rx$ and $P + Ry$ are $S$-$w$-finite with respect to $s$, implying
	\[
	s(P + Rx) \subseteq \sqrt{(I_x)_W},\qquad
	s(P + Ry) \subseteq \sqrt{(I_y)_W}
	\]
	for some finitely generated ideals $I_x$ and $I_y$ of $P + Rx$ and $P + Ry$, respectively. It follows that
\begin{align*}
	s^2(P + Rx)(P + Ry) &\subseteq \sqrt{(I_x)_W}\,\sqrt{(I_y)_W} \\
	&= \sqrt{(I_x)_W(I_y)_W}\\
		&\subseteq \sqrt{((I_x)_W(I_y)_W)_W}\\
	&\subseteq \sqrt{(I_xI_y)_W}.
\end{align*}
	Thus, $s^2P \subseteq \sqrt{(I_xI_y)_W}$, leading to a contradiction since $sP \subseteq \sqrt{(I_xI_y)_W}$.
	
	Set $\mathfrak p = P \cap R$. Then $\mathfrak p$ is a prime $w$-ideal of $R$. So $s\mathfrak p\subseteq \sqrt{K_w}$ for some finitely generated subideal $K$  of $\mathfrak p$. Set $M=(\mathfrak pR[x])_W$.
Then $sM=s(\mathfrak pR[x])_W\subseteq (\sqrt{K}R[x])_W$ by \cite[Proposition 6.2.3]{FK24}. So $M$ is a  radically $S$-$w$-finite $w$-ideal  with respect to $s$. Note $M=(\mathfrak pR[x])_W=((P \cap R)R[x])_W\subseteq P_W=P.$ Hence $M\subsetneq P.$

Let $f$ be an element of least degree in $P \setminus M$. Let $n$ be the degree of $f$ and let $a$ be the coefficient of $X^n$ in $f$. Then $n \geq 1$ and $a \notin P$. Indeed, if $n = 0$, then $f = a \in P \cap R \subseteq M$. This contradicts the choice of $f$. Hence $n \geq 1$. Next, suppose to the contrary that $a \in P$. Then $f - aX^n \in P$. Since $\deg(f - aX^n) < \deg(f)$, $f - aX^n \in M$ by the minimality of $n$. Also, note that $a \in P \cap R$; so $aX^n \in (P \cap R)R[X] \subseteq M$. Therefore $f = (f - aX^n) + aX^n \in M$, which is a contradiction. Hence $a \notin P$.

Since $a \notin P$ and $P$ is a $w$-ideal of $R[x]$, $P \subsetneq (P + (a))_W$. So by the maximality of $P$, $(P + (a))_W$ is a $w$-finite $w$-ideal of $R[X]$. Therefore there exists a finitely generated ideal $F$ of $R[X]$ such that $s(P + (a))_W\subseteq \sqrt{F_W}$, where $F = B + (a)$ for some finitely generated subideal $B$ of $P$, that is,
\[
s(P + (a))_W \subseteq \sqrt{(B + (a))_W}.
\]

Let $g \in P$. If $g \in M$, then $ag \in ((f) + M)_W$. Now suppose that  $g \in P \setminus M$. Let $m$ be the degree of $g$. Then $m \geq n$. Notice the minimality of $n$. By a routine iterative calculation, one can find suitable elements $q, r \in R[X]$ such that
\[
a^m g = f q + r
\quad\text{and}\quad
\deg(r) < \deg(f).
\]
Therefore $r = a^m g - f q \in P$. By the minimality of $n$, we have $r \in M$. Also, note that
\[
(ag)^m = f q g^{m-1} + r g^{m-1} \in ((f) + M)_W,
\]
so $ag \in \sqrt{((f) + M)_W}$. Hence $aP \subseteq \sqrt{((f) + M)_W}$.

Now, we obtain
\begin{align*}
s^2P^2 &\subseteq s^2P(P + (a))_W \\
&\subseteq s^2P\sqrt{(B + (a))_W} \\
&\subseteq s(\sqrt{P(B + (a))})_W \\
&\subseteq s(\sqrt{B + aP})_W \\
&\subseteq s\sqrt{(B + ((f) + M)_W)_W}\\
&\subseteq\sqrt{\bigl(B + (f) + sM\bigr)_W}\\
&\subseteq\sqrt{\bigl(B + (f) + KR[X]\bigr)_W}.
\end{align*}
Hence $sP\subseteq
\sqrt{\bigl(B + (f) + KR[X]\bigr)_W}$
implying  $P$ is a $S$-$w$-finite ideal of $R[X]$ with respect to $s$, as $B + (f) + KR[X]$ is a finitely generated subideal of $P$. This is a contradiction to the fact that $P$ is not an $S$-$w$-finite ideal of $R[X]$ with respect to $s$. Thus $R[X]$ has $S$-$w$-Noetherian spectrum with respect to $s$.
	
	$(2) \Rightarrow (1)$ Let $I$ be an ideal of $R$. Then $IR[X]$ is an ideal of $R[X]$. Since $R[X]$ has uniformly $S$-$w$-Noetherian spectrum with respect to $s$, $IR[X]$ is a radically $S$-$w$-finite ideal of $R[X]$ with respect to $s$. So there is a finitely generated subideal $K$ of $I$ such that
	\[
	sIR[X]\subseteq \sqrt{(KR[X])_W}.
	\]
	Therefore, it follows by  follows by \cite[Proposition 6.5.2]{FK24} that
	\[
sI_w=sI_w R[X] \cap R
	= s(IR[X])_W \cap R
\subseteq \sqrt{(KR[X])_W }\cap R
	= \sqrt{K_w R[X]\cap R}
	=\sqrt{K_w}.
	\]
Hence $I$ is a $S$-$w$-finite ideal of $R$ with respect to $s$. Thus $R$ has $S$-$w$-Noetherian spectrum with respect to $s$.

	$(2)\Rightarrow (3)$:  Let $I$ be an ideal of $R\{x\}.$ Then there exists an ideal $K$ of $R[X]$ such
that $I = K_{S_w}$. Since $R[X]$ has $w$-Noetherian spectrum, there exists a finitely generated
subideal $L$ of $K$ such that $sK\subseteq \sqrt{L_W}$. So for any $f\in K$ there is $J\in\GV(R)$ and a positive integer $n\geq 1$ such that $(Jsf)^n\subseteq L$. Since $(J^n)_{S_w}=R\{x\}$, we have $(s\frac{f}{1})^n\in L_{S_w}$. Hence $sI\subseteq \sqrt{L_{S_w}}$. Consequently, 	$R\{x\}$ has uniformly $S$-Noetherian spectrum with respect to $s$.

$(3)\Rightarrow (1)$: Suppose $R\{x\}$ has uniformly $S$-Noetherian spectrum with respect to  $s$. Let $I$ be an ideal of $R$. Then $I\{x\}$ is radically $S$-finite  with respect to $s$ as an $R\{x\}$-ideal. So $I$ is radically $S$-$w$-finite with respect to $s$	by Proposition \ref{swf-nag}. Hence, $R$ is has uniformly $S$-$w$-Noetherian spectrum.
\end{proof}

\begin{corollary}
	Let $R$ be a ring.
	Then  the following statements are equivalent.
	\begin{enumerate}
		\item $R$ has $w$-Noetherian spectrum.
		\item  $R[X]$ has  $w$-Noetherian spectrum.
		\item
		$R\{X\}$ has  Noetherian spectrum.
	\end{enumerate}	
\end{corollary}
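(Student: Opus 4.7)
The plan is straightforward: this corollary is the $w$-operation analogue of Corollary \ref{count}, and it should be obtained by specializing Theorem \ref{main-2} to the trivial multiplicative set $S = \{1\}$. First I would observe that when $S = \{1\}$, the only available $s$ is $s = 1$, so the condition ``$sI \subseteq \sqrt{F_w}$ for some finitely generated subideal $F$ of $I$'' becomes ``$I \subseteq \sqrt{F_w}$'', which together with $F \subseteq I$ forces $\sqrt{F_w} \subseteq \sqrt{I_w}$ and yields exactly the defining property of $w$-Noetherian spectrum given in Section~3. The same specialization identifies ``uniformly $S$-Noetherian spectrum with respect to $1$'' with the classical notion of Noetherian spectrum recalled in the introduction.

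With these identifications in place, I would simply apply Theorem \ref{main-2} with $S = \{1\}$ and $s = 1$. The equivalence $(1) \Leftrightarrow (2)$ in the corollary becomes the equivalence of $R$ and $R[X]$ having $w$-Noetherian spectrum, and $(1) \Leftrightarrow (3)$ becomes the equivalence of $R$ having $w$-Noetherian spectrum and $R\{X\}$ having Noetherian spectrum. There is no genuine obstacle; the only thing worth explicitly noting is that the translation from the ``uniformly $\{1\}$-$w$-Noetherian spectrum'' wording into the ``$w$-Noetherian spectrum'' wording is purely terminological, and similarly for $R\{X\}$.

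\begin{proof}
Take $S = \{1\}$ in Theorem \ref{main-2}.
\end{proof}
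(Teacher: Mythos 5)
Your proposal is correct and coincides with the paper's own proof, which likewise just specializes Theorem \ref{main-2} to $S=\{1\}$ (and $s=1$); your remarks about the purely terminological translation between ``uniformly $\{1\}$-($w$-)Noetherian spectrum'' and the classical notions are accurate.
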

\begin{proof}
	Take $S=\{1\}$ in Theorem \ref{main-2}
\end{proof}

\end{document}